\newtheorem{defn}{Definition}[section]  
\newtheorem{rem}[defn]{Remark}
\theoremstyle{plain}
\newtheorem{prop}[defn]{Proposition} 
\newtheorem{lem}[defn]{Lemma} 
\newtheorem{cor}[defn]{Collorary}
\newtheorem*{thm*}{Theorem}
\newenvironment{pf*}[1]{\proof[#1]}{\endproof}
\newcommand{\myclaim}{問題}
\newtheorem{claim}[defn]{\myclaim}
\newcommand{\lm}{\lambda}
\newcommand{\llm}{{\lambda}'}
\newcommand{\Lm}{\Lambda}
\newcommand{\fp}{\mathbb{F}_p}
\newcommand{\fpp}{\mathbb{F}_{p^2}}
\newcommand{\fq}{\mathbb{F}_q}
\newcommand{\bQ}{\mathbb{Q}}
\newcommand{\bN}{\mathbb{N}}
\newcommand{\bZ}{\mathbb{Z}}
\newcommand{\super}{\text{supersingular}}
\newcommand{\height}{\operatorname{ht}}
\title{The Lang-Trotter conjecture on average for genus-$2$ curves with Klein-$4$ reduced automorphism group }
\author{Chihiro Ando\thanks{Graduate School of Environment and Information Sciences, Yokohama National University.
E-mail: \texttt{ando-chihiro-zs@ynu.jp}}}
\begin{document}

\maketitle

\begin{abstract}
    \noindent 
    For an elliptic curve $E$ over $\mathbb{Q}$ without complex multiplication, the Lang--Trotter conjecture is that
    \[
    \# \{ p<X \mid E \text{ has a supersingular reduction at } p \} \sim \frac{c\sqrt{X}}{\log X}
    \]
    as $X \rightarrow \infty$, where $c>0$ is a constant depending only on $E$. \textcolor{black}{Fouvry} and Murty \textcolor{black}{obtained} an average estimation related to the Lang--Trotter conjecture, called the Lang--Trotter conjecture on average. We consider the Lang--Trotter conjecture for curves of genus 2 and obtain a similar result to the Lang--Trotter conjecture on average for the family of curves $C_{\lambda}:y^2=x(x-1)(x+1)(x-{\lambda})(x-1/ \lambda)$. These curves are characterized as curves of genus two with a reduced automorphism group containing the Klein $4$-group. 
    
\end{abstract}

\section{Introduction}

Let $E_{a,b}$ be the elliptic curve defined by the Weierstrass equation 
\[
y^2=x^3+ax+b 
\]
for $a,b \in \bQ$. Let ${\pi}_0(X,a,b)$ be the number of primes $p<X$ such that $E_{a,b}$ has a supersingular reduction at $p$. For $E_{a,b}$ with complex multiplication, Deuring \cite{Deu} showed that 
\[
{\pi}_0(X,a,b) \sim \frac{1}{2} \frac{X}{\log X}.
\]
For $E$ without complex multiplication, Lang and Trotter \cite{LT} \textcolor{black}{conjectured} the existence of a constant $c$ depending only on $E_{a,b}$ such that
\[
{\pi}_0(X,a,b) \sim \frac{c \sqrt{X}}{\log X}
\]
as $X \rightarrow \infty$. Although it is still an open question, a weaker result due to Elkies \cite{Elkies} is that there are infinitely many supersingular primes. 
In addition, \textcolor{black}{Fouvry} and Murty obtained an average estimation related to the Lang--Trotter conjecture. They showed that for $A,B > X^{1+ \epsilon}$, 
\[
\frac{1}{AB} \sum_{\lvert a \rvert \le A} \sum_{\lvert b \rvert \le B} {\pi}_{0}(X,a,b) \sim \frac{4 \pi}{3} \frac{\sqrt{X}}{\log X},
\]
where $a,b$ run  through the integers. They used the fact that the number of supersingular elliptic curves $E_{a,b}$ over $\fp$ ($p \neq2,3$) with $0 \le a,b < p $ is equal to   
\[
\frac{p}{2} \cdot \left(h(-4p)+h(-p) \right)+O(p),
\]
where $h(-p)$ and $h(-4p)$ are the class numbers of primitive quadratic forms with discriminants $-p$ and $-4p$, respectively.
Moreover, they proved that the average estimation is true if $A,B > X^{1/2+ \epsilon}$ and $AB > X^{3/2 +\epsilon}$.
This result is called \textit{the Lang--Trotter conjecture on average}. 
Let \textcolor{black}{\[ a_p(E):=p+1-\#E(\fp).\]} Then, \textcolor{black}{Fouvry} and Murty's result is the case in which $a_p(E)=0$. David and Pappalardi \cite{DP} generalized this result to the case in which $a_p(E)=r$ for any $r \in \bZ$, and Baier \cite{SB} improved the error terms in their results.  
 
In this paper, we extend the Lang--Trotter conjecture on average to curves of genus 2. \textcolor{black}{Let $K$ be an algebraically closed field of characteristic $p \ne 2$.} Curves of genus 2 \textcolor{black}{over $K$}  are classified into 7 types (cf. Igusa \cite{Igusa}; see Section \ref{subsec:2.2} for a review). In particular, we focus on one of them: the genus-2 curves defined by the following equation \textcolor{black}{with parameter $\lm \in K$}:
\[
C_{\lm}:y^2=x(x-1)(x+1)(x-{\lambda}) \left(x-\frac{1}{\lambda}\right).
\]
 These genus-two curves are characterized as having the reduced automorphism group ${\rm RA}(C_\lambda)$ containing the Klein 4-group $V_4$. \textcolor{black}{For almost all $\lambda$} , it is known that ${\rm RA}(C_\lambda)$ is isomorphic to $V_4$ \textcolor{black}{(cf. Igusa \cite[\S 8]{Igusa}, Ibukiyama, Katsura and Oort \cite[\S 1]{IKO02})}. 
Here, the \textit{reduced automorphism group} of \textcolor{black}{a} hyperelliptic curve $C$ is the quotient of the automorphism group of $C$ by the central subgroup generated by hyperelliptic involution.
Let $p$ be a prime such that $p \ge 5$. We consider whether $C_{\lm}$ has a superspecial reduction at $p$.
Here, a curve $C$ over a field $K$ is \textit{superspecial} if its Jacobian variety is isomorphic to a product of supersingular elliptic curves over $\overline{K}$, the algebraic closure. 

For superspecial genus-2 curves, Ibukiyama, Katsura and Oort \cite{IKO02} determined the number of isomorphism classes of superspecial genus-2 curves for each of the 7 types. Ibukiyama and Katsura \cite{IKadd} also described the number of isomorphism classes of superspecial genus-2 curves that have models over $\fp$ in terms of a class number and a type number. In addition, Katsura and Oort \cite{KatsuraOort} counted the number of $(a,b) \in {\overline{\fp}}^2$ 
such that the genus-2 curve $C_{a,b}:y^2=(x^2-1)(x^2-a)(x^2-b)$ is superspecial
and has a reduced automorphism group $V_4$.
However, they did not count the number of $(a,b) \in {\fp}^2$ 
such that the genus-2 curve $C_{a,b}$ is superspecial, nor the number of $\lambda\in\fp$ such that $C_\lambda$ is superspecial.
In this paper, we first determine the latter number to consider the Lang--Trotter conjecture on average for curves $C_{\lm}$. We set
\[
{\psi}_{p} := \# \lbrace \lm \in \fp\smallsetminus \{ 0, \pm1\} \mid C_{\lm} \text{ is superspecial}  \rbrace
\]
for a fixed $p$ and determine it. Next, for a fixed $\lm \in \bZ$ and a curve $C_{\lm}$, we set
\[
{\phi}_{\lm}(X):= \# \lbrace p<X \mid C_{\lm} \text{ has a superspecial reduction at } p  \rbrace .
\]
Using the value of $\psi_p$, we compute $\sum_{\lvert \lm \rvert \le N} {\phi}_{\lm}(X)$ for $N \ge X^{1+\epsilon}$. The first theorem determines $\psi_p$ in terms of class numbers $h(-p)$ and $ h(-4p)$. 

\begin{restatable}{theor}{thmA} \label{thm:A}
Let $p$ be prime with $p \ge 5$. Then, we have
   \begin{enumerate}
        \item[(1)] ${\psi}_{p}=h(-4p)$ if $p \equiv 1 \bmod 4$,
        \item[(2)] ${\psi}_{p}=6h(-p)-2$ if $p \equiv 3 \bmod 8$,
        \item[(3)] ${\psi}_{p}=2h(-p)-2$ if $p \equiv 7 \bmod 8$.
    \end{enumerate}
\end{restatable}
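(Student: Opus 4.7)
The plan is to reduce superspeciality of $C_\lambda$ to a pair of supersingularity conditions on elliptic quotients arising from the $V_4$ symmetry, and then to count using Deuring's class number formulas.

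First, I would exploit the Klein-$4$ reduced automorphism group. The branch locus $\{0, \pm 1, \lambda, 1/\lambda, \infty\}$ is preserved by the involution $\sigma : x \mapsto 1/x$ (which swaps $\{0, \infty\}$ and $\{\lambda, 1/\lambda\}$ while fixing $\pm 1$); a direct computation shows $\sigma$ lifts to an automorphism $\tilde\sigma$ of $C_\lambda$ of order $4$, with $\tilde\sigma^{2}$ the hyperelliptic involution. Combined with the other elements of $V_4 \subset \mathrm{RA}(C_\lambda)$, this yields a $(2,2)$-isogeny $J(C_\lambda) \to E_1(\lambda) \times E_2(\lambda)$. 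Explicit equations for $E_1, E_2$ (in Legendre or similar form, with parameters rational in $\mu = \lambda + 1/\lambda$) should emerge from substitutions such as $u = x + 1/x$, $v = y/x^k$. One then shows $C_\lambda$ is superspecial iff both $E_i(\lambda)$ are supersingular: directly via the $2 \times 2$ Hasse-Witt matrix of $C_\lambda$ (writing $f(x)^{(p-1)/2} = x^{(p-1)/2} P(x)$ with $P(x) = (x^2-1)^{(p-1)/2}(x^2 - \mu x + 1)^{(p-1)/2}$, one checks $x^{2(p-1)} P(1/x) = (-1)^{(p-1)/2} P(x)$, so $P$ is (anti-)palindromic, the four matrix entries collapse to two, and each is the Hasse invariant of one $E_i$); or abstractly by Frobenius-functoriality under the quotient maps together with the fact that a product of supersingular elliptic curves is superspecial.

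With this reduction, $\psi_p$ equals the number of $\lambda \in \fp \setminus \{0, \pm 1\}$ making both $E_i(\lambda)$ supersingular. By the Deuring correspondence, isomorphism classes of supersingular elliptic curves over $\fp$ are indexed by ideal classes in orders of $\bQ(\sqrt{-p})$, counted by $h(-p)$ and $h(-4p)$. The trichotomy by $p \bmod 8$ arises from two arithmetic inputs: (a) whether $\sqrt{-1} \in \fp$ (governed by $p \bmod 4$), which determines whether $\tilde\sigma$ and the other Klein-$4$ lifts are $\fp$-rational and thus which class number enters; (b) whether $\pm 2$ is a square in $\fp$ (governed by $p \bmod 8$), which controls whether elliptic curves with CM by $\bZ[i]$ or $\bZ[\sqrt{-2}]$ (having special $j$-invariants) are $\fp$-rational. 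The factors $6$ and $2$ then count the generic sizes of fibers of $\lambda \mapsto (j(E_1), j(E_2))$, and the ``$-2$'' corrections in cases (2) and (3) account for degenerate or doubly-counted values of $\lambda$.

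The principal difficulty is the first step: because $\tilde\sigma$ has order $4$ rather than $2$, the quotient $C_\lambda/\langle \tilde\sigma \rangle$ is not literally an elliptic curve, so $E_1, E_2$ must be constructed via the Jacobian splitting or by finding alternative lifts of $V_4$ inside $\mathrm{Aut}(C_\lambda)$. Obtaining clean rational expressions for the $j$-invariants $j(E_i(\lambda))$, and controlling the exceptional behavior when $E_1(\lambda) \cong E_2(\lambda)$ (which forces extra symmetry and changes the fiber count), is the technical heart of the argument. Once this is settled, the class-number count is a standard Deuring-type computation, but the clean mod-$8$ trichotomy still requires careful bookkeeping of which $\lambda$-values yield $\fp$-rational supersingular pairs as opposed to pairs that only become supersingular over $\fpp$.
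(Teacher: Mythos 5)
Your overall strategy---reduce superspeciality of $C_\lambda$ to supersingularity of two elliptic quotients and then count via class numbers---coincides with the paper's, which takes the reduction directly from Ibukiyama--Katsura--Oort: $C_\lambda$ is superspecial iff $E_{\Lambda}:y^2=x(x-1)(x+(\lambda-\sqrt{\lambda^2-1})^2)$ and $E_{\Lambda'}:y^2=x(x-1)(x+(\lambda+\sqrt{\lambda^2-1})^2)$ are both supersingular. You spend most of your effort on re-deriving this reduction and call it the principal difficulty; in fact it is a citable known result, and the genuine gap in your proposal is the counting step, which you dismiss as ``a standard Deuring-type computation'' without giving an argument. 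The obstacle is that $\Lambda(\lambda)=-(\lambda-\sqrt{\lambda^2-1})^2$ lives in $\fpp$ rather than $\fp$, and the set of Legendre parameters of this form is a proper, congruence-dependent subset of all supersingular Legendre parameters. Concretely one must: (i) show $\lambda\mapsto\{\Lambda(\lambda),\Lambda'(\lambda)\}$ is injective up to $\lambda\mapsto-\lambda$ (note $\Lambda\Lambda'=1$); (ii) determine its image in each congruence class --- for $p\equiv 1\bmod 4$ one proves $\sqrt{\lambda^2-1}\notin\fp$ always, so $\Lambda(\lambda)=\mu^{p-1}$ with $\mu\in\fpp\smallsetminus\fp$ and the image is counted by $h(-4p)$ via the classification of $\mathrm{End}_{\fp}$; for $p\equiv 3\bmod 4$ the image splits into two pieces according to whether $\sqrt{\lambda^2-1}\in\fp$; (iii) establish the hard cardinalities, e.g.\ that for $p\equiv 7\bmod 8$ exactly one of the three Legendre parameters of a supersingular curve has the form $-t^2$ (this requires a $2$-divisibility argument for the point $(1,0)$ in $E(\fp)$, not just quadratic-residue bookkeeping), and that the piece with $\sqrt{\lambda^2-1}\notin\fp$ is in bijection with $\{-t^2\}\smallsetminus\{-1\}$ via an explicit $2$-isogeny. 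None of this appears in your proposal.

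Your heuristics for the trichotomy are also off. The factors $6$ and $2$ are not fiber sizes of $\lambda\mapsto(j(E_1),j(E_2))$ (that fiber is generically $\{\pm\lambda\}$, of size $2$); they arise as $3h(-p)+3h(-p)$ resp.\ $h(-p)+h(-p)$ from the two pieces of the image, where the $3$ versus $1$ is the number of Legendre parameters of the form $-t^2$ per $\fp$-isomorphism class, which flips according to whether $2$ is a square mod $p$ --- not according to rationality of curves with CM by $\bZ[i]$ or $\bZ[\sqrt{-2}]$. The ``$-2$'' is the exclusion of the single parameter $-1$ (corresponding to the excluded $\lambda=0,\pm1$) from each of the two pieces, not a correction for $E_1\cong E_2$. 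As written, the proposal would not yield the stated formulas without supplying essentially all of the paper's Section 3.
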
   

 The proof uses the result of Ibukiyama, Katsura and Oort \cite[\S 1.2]{IKO02} that $C_{\lm}$ is superspecial if and only if two elliptic curves 
 \begin{equation}\label{eq:E_Lambda}
     E_{\Lm}:y^2=x(x-1)\left(x+(\lm - \sqrt{{\lm}^2-1})^2 \right)
 \end{equation}
 and 
\begin{equation}\label{eq:E_LambdaPrime}
 E_{{\Lm}'}:y^2=x(x-1)\left(x+(\lm +\sqrt{{\lm}^2-1})^2 \right)
 \end{equation}
 are supersingular. We make a one-to-one correspondence between a pair of superspecial curves $\{C_{\lm},C_{-\lm}\}$ and a pair of supersingular elliptic curves $\{ E_{\Lm},E_{{\Lm}'}\}$ and determine the value of ${\psi}_{p}$; see Section \ref{theorema}. This enables us to reduce the problem to that for elliptic curves.
However,
Theorem \ref{thm:A} does not immediately follow from known results for elliptic curves. The difficulties arise from the fact that the descriptions in \eqref{eq:E_Lambda} and \eqref{eq:E_LambdaPrime} require the field extension $\mathbb F_{p^2}/\mathbb F_{p}$ and that the set\textcolor{black}{s} of their $j$-invariants \textcolor{black}{are} proper subset\textcolor{black}{s} of the set\textcolor{black}{s} of supersingular $j$-invariants since the elliptic curves are in a specific form.
Thus, we require elaborate arguments to obtain Theorem \ref{thm:A}; see Section \ref{sec:ssing_and_classnum}. Regarding the enumeration of superspecial curves in a specific form, this result is reminiscent of the result of Auer and Top \cite[Proposition 3.2]{AT02} on the enumeration of supersingular elliptic curves in the Legendre form. 

In the second theorem, we compute $\sum_{\lvert \lm \rvert \le N} {\phi}_{\lm}(X)$ via the result of Theorem A. 
\begin{restatable}{theor}{thmB} \label{thm:B}
    Let $X,\epsilon$ be positive numbers, and let $N$ be a positive number that satisfies $N > X^{1+\epsilon}$. Then, we have
    \[
   \frac{1}{N} \sum_{\lvert \lm \rvert \le N} {\phi}_{\lm}(X) \sim \frac{3\pi}{2} \frac{\sqrt{X}}{\log X} 
    \]
    as $X \rightarrow \infty$.
\end{restatable}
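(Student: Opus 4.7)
The plan is to mirror the Fouvry--Murty averaging argument, using Theorem~\ref{thm:A} as the combinatorial input in place of the classical supersingular mass formula. First, interchanging the order of summation gives
\[
\sum_{|\lambda|\le N}\phi_\lambda(X)=\sum_{5\le p<X}\#\bigl\{\lambda\in\bZ:|\lambda|\le N,\ C_\lambda\bmod p\text{ is superspecial}\bigr\}.
\]
Since the property depends only on $\lambda\bmod p$ and each residue class in $\fp$ is hit $2N/p+O(1)$ times by integers in $[-N,N]$, the inner count equals $\psi_p(2N/p+O(1))$, so that
\[
\sum_{|\lambda|\le N}\phi_\lambda(X)=2N\sum_{p<X}\frac{\psi_p}{p}+O\!\Bigl(\sum_{p<X}\psi_p\Bigr).
\]
The classical bound $h(-D)\ll\sqrt{D}\log D$ applied to Theorem~\ref{thm:A} gives $\psi_p\ll\sqrt{p}\log p$, so the error is $O(X^{3/2}\log X)$, which is dominated by the expected main term $N\sqrt{X}/\log X$ whenever $N\ge X^{1+\epsilon}$.

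Substituting Theorem~\ref{thm:A} and applying the Dirichlet class number formula $h(-D)=\sqrt{D}\,L(1,\chi_{-D})/\pi$ for fundamental discriminants $-D$ with $D>4$ rewrites the main term as
\[
\frac{2N}{\pi}\Biggl(2\!\!\sum_{\substack{p<X\\p\equiv 1(4)}}\frac{L(1,\chi_{-4p})}{\sqrt{p}}+6\!\!\sum_{\substack{p<X\\p\equiv 3(8)}}\frac{L(1,\chi_{-p})}{\sqrt{p}}+2\!\!\sum_{\substack{p<X\\p\equiv 7(8)}}\frac{L(1,\chi_{-p})}{\sqrt{p}}\Biggr),
\]
up to a negligible $O(N\log\log X)$ from the constants $-2$ in Theorem~\ref{thm:A}. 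Each of the three prime sums is evaluated by the standard recipe: expand the $L$-value as a Dirichlet series, truncate at a small power of $X$, interchange summations, and apply PNT in arithmetic progressions (for instance via Siegel--Walfisz). The off-diagonal Kronecker-symbol terms cancel; the diagonal square terms assemble into Euler products whose local factor at the ramified prime~$2$ depends on the residue of $p$ modulo~$8$.

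A direct computation of these Euler products yields the mean values
\[
\mathrm{avg}_{p\equiv 1(4)}L(1,\chi_{-4p})=\tfrac{\pi^2}{8},\quad\mathrm{avg}_{p\equiv 3(8)}L(1,\chi_{-p})=\tfrac{\pi^2}{12},\quad\mathrm{avg}_{p\equiv 7(8)}L(1,\chi_{-p})=\tfrac{\pi^2}{4},
\]
which combined with the partial-summation estimate $\sum_{p\le X,\,p\equiv a(m)}p^{-1/2}\sim 2\sqrt{X}/(\phi(m)\log X)$ make the three prime sums above asymptotic to $\pi^2/8$, $\pi^2/24$, and $\pi^2/8$ times $\sqrt{X}/\log X$, respectively. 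With the weights $2,6,2$ these combine to give $\sum_{p<X}\psi_p/p\sim(3\pi/4)\sqrt{X}/\log X$; multiplying by $2N$ and dividing by $N$ then produces the claimed $(3\pi/2)\sqrt{X}/\log X$. The main technical point is the rigorous justification of the three $L$-value averages, for which one must carefully track the local factor at the ramified prime $2$ for each progression modulo $8$. The analytic ingredients (Siegel--Walfisz and partial summation) are entirely classical, and the weights $(2,6,2)$ arising from Theorem~\ref{thm:A} differ from the $(2,4,2)$ of Fouvry--Murty only at the progression $p\equiv 3\bmod 8$.
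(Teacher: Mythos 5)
Your proposal follows the same route as the paper: interchange the two sums, count residues to get $2N\sum_{p<X}\psi_p/p + O(\sum_p \psi_p)$, insert Theorem \ref{thm:A} and the Dirichlet class number formula, and reduce to averaging $L(1,\chi_{-4p})$ and $L(1,\chi_{-p})$ over the progressions $p\equiv 1\ (4)$, $3\ (8)$, $7\ (8)$. All of your constants check out against the paper's computation: the three averages $\pi^2/8$, $\pi^2/12$, $\pi^2/4$ are exactly what the paper's $S_1$, $S_3$, $S_7$ encode (including the sign of $\left(\tfrac{2}{p}\right)$ at the ramified prime $2$, which is what distinguishes $3\bmod 8$ from $7\bmod 8$), and the weights $(2,6,2)$ recombine to $\tfrac{3\pi}{2}$ correctly.

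The one genuine gap is your treatment of the off-diagonal (non-square $n$) terms, which you dispatch with ``apply PNT in arithmetic progressions (for instance via Siegel--Walfisz)'' and ``the off-diagonal Kronecker-symbol terms cancel.'' This does not work as stated. The truncation point $U$ of the Dirichlet series cannot be taken as small as a power of $\log X$: the Polya--Vinogradov tail satisfies only $|r_2(U,-p)|\ll \sqrt{p}\log p/U$, and after summing over $p<X$ this contributes $O(NX/U)$, so one is forced to take $U$ at least of size $\sqrt{X}\log X$ (the paper takes $U=X^{3/4}$). For non-square $n$ of that size the character $\left(\tfrac{n}{\cdot}\right)$ has conductor up to a power of $X$, far outside the range where Siegel--Walfisz gives cancellation in $\sum_{p<X}\left(\tfrac{n}{p}\right)p^{-1/2}$ for each individual $n$. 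The saving must instead come from averaging over $n$: this is precisely Jutila's mean-value estimate $\sideset{}{^*}\sum_{|d|\le D}\bigl|\sum_{n\le X}\Lambda(n)\left(\tfrac{d}{n}\right)\bigr|\ll XD(\log X)^{-C}$ and its refinements to the progressions $n\equiv 3,7\bmod 8$ (the paper's Lemma \ref{jutila}), combined with partial summation. Without this ingredient --- which is the actual analytic content of the Fouvry--Murty method --- the off-diagonal terms are not controlled, so you should replace the appeal to Siegel--Walfisz by the averaged character-sum bound. Everything else in your argument is sound.
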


We use the idea of \textcolor{black}{Fouvry} and Murty. First, we change the order of \textcolor{black}{summations} in the following way.
\begin{eqnarray}
\label{changesum}
\sum_{|\lm| \le N} {\phi}_{\lm}(X) =  \sum_{|\lm| \le N} \sum_{\substack{p < X \\  C_{\lm} \text{ is superspecial} }} 1 
= \sum_{p < X} \sum_{\substack{|\lm| \le N \\ C_{\lm} \text{ is superspecial} }} 1 \textcolor{black}{.}
\end{eqnarray}
If $\lm \equiv \llm \bmod p$, then $C_{\lm}$ and $C_{\llm}$ are the same curve in $\fp$, and in particular, if one is superspecial, the other is also superspecial. Hence, for every $p \ge 5$, we describe
the inner sum of the right-hand side of \eqref{changesum} in terms of ${\psi}_p$.
From the result of Theorem \ref{thm:A}, we can express $\sum_{|\lm| \le N} {\phi}_{\lm}(X)$ \textcolor{black}{in terms of} class numbers, and we can calculate it by using the class number formula.

In addition to Theorem \ref{thm:B}, we have a similar result for $\lm$ running through rational numbers whose height is less than a 
 positive number $N > X^{1+ \epsilon}$. Here, for $\lm = b/a$ with coprime integers $a,b$, the \textit{height} of $\lm$ is defined as
 $\text{max}\{|a|,|b|\}$, denoted by $\height(\lm)$.   

 \begin{restatable}{theor}{thmC} \label{thm:C}
      Let $X,\epsilon$ be positive numbers and $N$ be a positive number that satisfies $N > X^{1+\epsilon}$. Then,
    \[
   \frac{1}{N^2} \sum_{\lm \in \bQ,\ \height(\lm) \le N } {\phi}_{\lm}(X) \sim \frac{9}{\pi} \frac{\sqrt{X}}{\log X} 
    \]
    as $X \rightarrow \infty$.
 \end{restatable}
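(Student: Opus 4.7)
The plan is to follow the strategy of Theorem~\ref{thm:B} in the rational setting, with a Möbius-inversion step inserted to handle the coprimality of numerator and denominator. Write each nonzero $\lambda \in \bQ$ uniquely as $\lambda = b/a$ with $a \geq 1$ and $\gcd(a, b) = 1$, so that $\height(\lambda) \leq N$ becomes $1 \leq a \leq N$ and $|b| \leq N$. Interchanging the order of summation as in \eqref{changesum} converts the target sum to
\[
\sum_{p < X} \#\bigl\{(a, b) : 1 \leq a \leq N,\ |b| \leq N,\ \gcd(a, b) = 1,\ b/a \bmod p \text{ superspecial}\bigr\},
\]
with the excluded values $\lambda \in \{0, \pm 1\}$ contributing $O(1)$.

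For each prime $5 \leq p < X$, Theorem~\ref{thm:A} says the $\lambda$-line modulo $p$ has $\psi_p$ superspecial points $\lambda_0 \in \fp \smallsetminus \{0, \pm 1\}$, and each such $\lambda_0$ is represented by the $p - 1$ residue pairs $(a_0,\, a_0 \lambda_0) \in \fp^{\times} \times \fp$. Fix one such pair $(a_0, b_0)$. Since $p \nmid a_0$, the coprimality condition $\gcd(a, b) = 1$ never interacts with $p$, so Möbius inversion and the Chinese Remainder Theorem give
\[
\sum_{\substack{1 \leq a \leq N,\ |b| \leq N \\ (a,b) \equiv (a_0,b_0) \bmod p \\ \gcd(a,b) = 1}} 1 = \frac{2 N^2}{p^2} \sum_{\substack{d \geq 1 \\ (d,p) = 1}} \frac{\mu(d)}{d^2} + O\!\left(\frac{N \log N}{p}\right) = \frac{12 N^2}{\pi^2 (p^2 - 1)} + O\!\left(\frac{N \log N}{p}\right),
\]
using $\sum_{(d,p)=1} \mu(d)/d^2 = 6 p^2 / (\pi^2 (p^2 - 1))$. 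Summing over the $p - 1$ residue pairs per $\lambda_0$ and then over the $\psi_p$ values of $\lambda_0$, the inner count equals $\dfrac{12\, N^2 \psi_p}{\pi^2 (p + 1)} + O(\psi_p\, N \log N)$.

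Summing over $p < X$ produces the main term
\[
\frac{12 N^2}{\pi^2} \sum_{p < X} \frac{\psi_p}{p + 1} \sim \frac{12 N^2}{\pi^2} \sum_{p < X} \frac{\psi_p}{p} \sim \frac{12 N^2}{\pi^2} \cdot \frac{3\pi}{4} \cdot \frac{\sqrt{X}}{\log X} = \frac{9 N^2}{\pi} \cdot \frac{\sqrt{X}}{\log X},
\]
where the asymptotic $\sum_{p < X} \psi_p/p \sim (3\pi/4)\sqrt{X}/\log X$ is the class-number sum already extracted from Theorem~\ref{thm:A} in the proof of Theorem~\ref{thm:B}. The cumulative error is $O\!\bigl(N \log N \sum_{p < X} \psi_p\bigr) = O(N X^{3/2} \log X)$ by Theorem~\ref{thm:A} and the Brauer--Siegel bound $h(-p) \ll \sqrt{p}\, \log p$, and is dominated by $N^2 \sqrt{X}/\log X$ precisely when $N > X^{1 + \epsilon}$. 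Dividing by $N^2$ then yields the claim. The main obstacle is keeping the Möbius-inversion error uniform in $p$; the factor $1/p$ gained from the congruence modulo $p$ provides the necessary margin, and the hypothesis $N > X^{1 + \epsilon}$ is exactly what absorbs the new arithmetic loss from the coprimality step that was absent in Theorem~\ref{thm:B}.
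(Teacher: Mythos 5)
Your proposal is correct in its main term and follows the same overall strategy as the paper: interchange the order of summation, count the rationals of height at most $N$ lying in each superspecial residue class modulo $p$, obtain a density of roughly $\tfrac{12}{\pi^2}\tfrac{N^2}{p}$ per class, and then reuse the asymptotic $\sum_{p<X}\psi_p/p \sim \tfrac{3\pi}{4}\sqrt{X}/\log X$ already extracted in the proof of Theorem \ref{thm:B}. Where you differ is in the organization of the lattice-point count: the paper fixes the denominator $b$ first, so the M\"obius sum runs only over the $d(b)$ divisors of $b$, the accumulated error is transparently $O(N\log N)$ per prime, and the main term reduces to $\sum_{b\le N}\varphi(b)/b$ evaluated by Abel summation; you instead perform a two-dimensional M\"obius inversion over $d=\gcd(a,b)$ combined with CRT. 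The one step you should not gloss over is your claimed error $O(N\log N/p)$ in that inversion: writing each congruence count as $\tfrac{N}{dp}+O(1)$ and multiplying, the $O(1)\cdot O(1)$ cross terms summed over $d\le N$ give a trivial bound of $O(N)$ \emph{per residue pair}, hence $O(pN\psi_p)$ per prime and $O\bigl(N\sum_{p<X}p\,\psi_p\bigr)=O(NX^{5/2})$ overall, which is \emph{not} dominated by $N^2\sqrt{X}/\log X$ under the hypothesis $N>X^{1+\epsilon}$ alone (one would need $N\gg X^{2}\log X$). The bound you assert is nevertheless attainable: for $d>N/p$ each congruence count is $O(1)$, and $\sum_{d}\#\{a\le N:\ d\mid a,\ a\equiv a_0 \bmod p\}=\sum_{a\le N,\ a\equiv a_0 \bmod p}\tau(a)\ll N^{1+o(1)}/p$, which restores an admissible error of $O(N^{1+o(1)}/p)$ per residue pair and hence $O(N^{1+o(1)}X^{3/2})$ in total. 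So your argument closes, but only after this extra divisor-sum estimate, which is precisely the complication the paper's fix-the-denominator arrangement avoids.
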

 
For each $t \in \fp$, we calculate the number of $\lm \in \bQ $ such that $\height(\lm) \le N$ and \textcolor{black}{$\lm \equiv t \bmod p$}. Then, we use the result of Theorem B to deduce Theorem C.

The following is an outline of this paper. In Section 2, we recall some properties of supersingular elliptic curves and genus-two curves, which we use in the proof of Theorem \ref{thm:A}. In addition, we recall an analytic result that we use in the proof of Theorem \ref{thm:B}. In Section 3, we express the number of supersingular elliptic curves in a specific form with the class numbers $h(-p),h(-4p)$. Sections 4, 5 and 6 give the proofs of Theorems \ref{thm:A}, \ref{thm:B}, and \ref{thm:C}, respectively.

\section{Preliminaries}
In this section, we recall some known results on supersingular elliptic curves and  their relations with the class numbers. We also review some properties of genus-two curves. Moreover, we recall an analytic result for the Legendre symbol.
  \subsection{Supersingular Legendre elliptic curves}\label{subsec:2.1}
  We review some results on elliptic curves by Auer and Top in \cite{AT02} and we describe some relations between supersingular elliptic curves and the class numbers.

  Let $E_{\lm}$ 
  be a Legendre elliptic curve $y^2=x(x-1)(x-\lm)$ for $\lambda\in\fp$.
  We assume that $j(E_{\lm}) \ne 0$.
  We determine which Legendre elliptic curves are isomorphic to $E_{\lm}$ over $\fp$. 
  We say an elliptic curve $E$ over $\fp$ is \textit{Legendre isomorphic} if $E$ is isomorphic over $\fp$ to a Legendre elliptic curve $E_{\lm}$ with $\lm \in \fp$. 
 
\begin{prop}[Auer and Top {\cite[Lemma 3.1.]{AT02}}]
\label{legendrecase}
    Let $p$ be a prime satisfying $p>3$ and \textcolor{black}{$p \equiv 3 \bmod 4$}. Let $E$ be an elliptic curve over $\fp$ with j-invariant $j(E) \ne 0$. If $E$ is Legendre isomorphic, then there are exactly 3 values of $\lm \in \fp \smallsetminus \{0,1\}$ such that $E \cong E_{\lm}$ over $\fp$. More precisely, we have
    \begin{itemize}
        \item  $E_{-1} \cong E_{2} \cong E_{1/2}$;
        \item  if $\lm \neq -1,2,1/2$, then $E_{\lm} \ncong E_{1-\lm},E_{1/\lm} \ncong E_{1-1/\lm},E_{1/(\lm-1)} \ncong E_{\lm/(\lm-1)}$ over $\fp$, and 
        \begin{enumerate}
        
            \item [] $E_{\lm} \cong E_{1/\lm} \cong E_{1/(1-\lm)} \ $ if $\lm \in {\fp}^2$ and $ \lm-1 \in {\fp}^2$,

            \item[]  $E_{\lm} \cong E_{1/\lm} \cong E_{\lm/(\lm-1)} \ $
            if $\lm \in {\fp}^2$ and $\lm-1 \notin {\fp}^2$,

            \item [] $E_{\lm} \cong E_{1-1/\lm} \cong E_{1/(1-\lm)} \ $ if $\lm \notin {\fp}^2$ and $ \lm-1 \in {\fp}^2$,

            \item[]  $E_{\lm} \cong E_{1-1/\lm} \cong E_{\lm/(\lm-1)} \ $
            if $\lm \notin {\fp}^2$ and $ \lm-1 \notin {\fp}^2$.
            
        \end{enumerate}
    \end{itemize}
\end{prop}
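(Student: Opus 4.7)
Any $\fp$-isomorphism between two elliptic curves in short Weierstrass form is of the shape $(x,y) \mapsto (u^2 x + r,\, u^3 y)$ with $u \in \fp^{\times}$ and $r \in \fp$. Specializing this to $E_\lm$ and $E_{\llm}$ and comparing the $x$-coordinates of the three nontrivial $2$-torsion points, such an isomorphism exists over $\fp$ if and only if there is an affine transformation $x \mapsto u^{-2}(x - r)$ (with $u \in \fp^{\times}$, $r \in \fp$) carrying $\{0,1,\lm\}$ bijectively onto $\{0,1,\llm\}$. The first step is therefore to enumerate the six such bijections; each is determined by where $0$ and $1$ are sent, which then forces the image of $\lm$, yielding both the expression of $\llm$ in terms of $\lm$ and the required value of $u^2$.

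A direct calculation produces the six pairs $(\llm,\, u^2)$:
\[
(\lm,\,1),\ (1-\lm,\,-1),\ (1/\lm,\,\lm),\ (1/(1-\lm),\,\lm-1),\ ((\lm-1)/\lm,\,-\lm),\ (\lm/(\lm-1),\,1-\lm).
\]
Hence $E_\lm \cong E_{\llm}$ over $\fp$ iff $\llm$ appears on this list and the associated $u^2$ lies in $\fp^2$. These six values of $u^2$ group into three sign-pairs $\{1,-1\}$, $\{\lm,-\lm\}$, and $\{\lm-1,1-\lm\}$. Because $p \equiv 3 \bmod 4$ forces $-1 \notin \fp^2$, exactly one element of each sign-pair is a (nonzero) square in $\fp$. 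This immediately yields the expected three $\fp$-isomorphic companions of $E_\lm$: the trivial one, a choice between $1/\lm$ and $(\lm-1)/\lm$ according to whether $\lm \in \fp^2$ or not, and a choice between $1/(1-\lm)$ and $\lm/(\lm-1)$ according to whether $\lm-1 \in \fp^2$ or not. Sorting by the four square/nonsquare patterns of $(\lm,\,\lm-1)$ reproduces the four listed sub-cases.

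The exceptional orbit $\{-1,2,1/2\}$, on which the size-$6$ orbit of the symmetric-group action collapses to size $3$, requires a short separate check. At $\lm=-1$ the companion $\llm = 2$ arises both as $1-\lm$ (with $u^2=-1$) and as $(\lm-1)/\lm$ (with $u^2=1$), so $E_{-1} \cong E_2$ always over $\fp$; similarly $\llm=1/2$ arises from the pairs $1/(1-\lm)$ and $\lm/(\lm-1)$ with $u^2 = -2$ and $u^2 = 2$ respectively, and since $p \equiv 3 \bmod 4$ forces exactly one of $\pm 2$ to be a square in $\fp$, we also obtain $E_{-1} \cong E_{1/2}$. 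The only real obstacle in the argument is clean bookkeeping of the six bijections and their sign-pair structure so that the case analysis aligns with the statement; no deeper technicality arises, as the hypothesis $j(E) \ne 0$ already excludes the other degenerate orbit $\{\lm:\lm^2-\lm+1=0\}$.
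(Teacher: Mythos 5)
Your proof is correct, and it is essentially the argument of the cited source: the paper itself states this proposition as a quoted result of Auer and Top \cite[Lemma 3.1]{AT02} and gives no proof, while your reduction to affine maps $x \mapsto u^2x+r$ permuting the $2$-torsion abscissas $\{0,1,\lm\}$, together with the observation that $-1 \notin \fp^2$ forces exactly one member of each sign-pair of twisting factors to be a square, is exactly how the lemma is proved there. One cosmetic remark: your table records the $u^2$-values only up to multiplication by squares (e.g.\ $1/\lm$ appears as $\lm$), which is harmless for the quadratic-residue argument, and your bookkeeping also makes visible that the pair written $E_{1/(\lm-1)} \ncong E_{\lm/(\lm-1)}$ in the statement should read $E_{1/(1-\lm)} \ncong E_{\lm/(\lm-1)}$.
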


The next lemma tells us when $\lm$ and $\lm-1$ belong to ${\fp}^2$.

\begin{prop}[Auer and Top {\cite[Lemma 2.1.]{AT02}}]
\label{abginf2?}
    Let $E$ be an elliptic curve $y^2=x(x-1)(x-\lm)$ with $\lm \in \fp \smallsetminus\{0,1\}$. Then,
    \[
    (\lm,0)\in [2]E(\fp) \text{ if and only if } \lm,\lm-1 \in \left( {\fp}^{*} \right) ^2,
    \]
   \textcolor{black}{ Here, the set $[2]E(\fp)$ is the image of $E(\fp)$ by the 2-multiplication map $[2]$.}
\end{prop}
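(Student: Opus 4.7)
The plan is to apply the classical complete $2$-descent homomorphism for an elliptic curve with full rational $2$-torsion. Writing $E : y^2 = (x - e_1)(x - e_2)(x - e_3)$ with $(e_1, e_2, e_3) = (0, 1, \lambda)$, there is a standard group homomorphism
\[
\mu = (\mu_1, \mu_2, \mu_3) \colon E(\fp) \longrightarrow \bigl({\fp}^{*}/({\fp}^{*})^2\bigr)^3
\]
defined on affine points $P = (x, y) \neq (e_i, 0)$ by $\mu_i(P) = x - e_i$, extended to the $2$-torsion by $\mu_i((e_i, 0)) = (e_i - e_j)(e_i - e_k)$ for $\{i, j, k\} = \{1, 2, 3\}$, and with $\mu(O) = (1, 1, 1)$. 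Its image satisfies the relation $\mu_1 \mu_2 \mu_3 \equiv 1$, and the classical fact I would cite (e.g.\ from Silverman, or re-derive from the duplication formula together with $H^1(\fp, E) = 0$ via Lang's theorem) is that $\ker \mu = 2 E(\fp)$.

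Granting that, the proof collapses to a three-line computation. For $T = (\lambda, 0) = (e_3, 0)$ I find
\[
\mu_1(T) = \lambda - 0 = \lambda, \qquad \mu_2(T) = \lambda - 1, \qquad \mu_3(T) = (\lambda - 0)(\lambda - 1) = \lambda(\lambda - 1),
\]
so $T \in 2 E(\fp)$ iff each of these three classes is trivial in ${\fp}^{*}/({\fp}^{*})^2$. Since the third factor is the product of the first two, the condition collapses to $\lambda, \lambda - 1 \in ({\fp}^{*})^2$, which is exactly the claim.

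The only real obstacle is justifying the identification $\ker \mu = 2 E(\fp)$ without appealing to a citation. If a self-contained argument is desired, one can instead attempt to solve $[2] Q = (\lambda, 0)$ directly: writing $Q = (x_1, y_1)$ and applying the doubling formula, the condition $x([2]Q) = \lambda$ yields a polynomial equation in $x_1$ whose analysis, combined with $y_1^2 = x_1(x_1 - 1)(x_1 - \lambda)$, reduces the $\fp$-solvability to the assertion that certain discriminants — which work out, up to squares, to $\lambda$ and $\lambda - 1$ — are squares in $\fp$. Either route reaches the same conclusion, and the $2$-descent viewpoint is by far the cleaner.
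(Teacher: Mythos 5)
The paper gives no proof of this proposition at all---it is quoted directly from Auer and Top \cite[Lemma 2.1]{AT02}---and your complete $2$-descent argument is correct and is essentially the standard proof of that lemma: with all of $E[2]$ rational the Kummer map $E(\fp)/2E(\fp)\hookrightarrow \bigl({\fp}^{*}/({\fp}^{*})^2\bigr)^2$ is injective over any field of characteristic $\ne 2$, and evaluating it at $(\lambda,0)$ produces exactly the classes of $\lambda$ and $\lambda-1$ (the third coordinate $\lambda(\lambda-1)$ being their product). One small remark: the identification $\ker\mu=2E(\fp)$ does not require Lang's theorem or the vanishing of $H^1(\fp,E)$; injectivity of the Kummer map is automatic from the long exact sequence attached to $0\to E[2]\to E\to E\to 0$, or can be obtained from the explicit half-point construction as in Silverman \cite[Proposition X.1.4]{AEC}.
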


The next proposition shows that for a supersingular elliptic curve $E_{\lm}$, $\lm$ is in a specific form.

\begin{prop}[Auer and Top {\cite[Proposition 3.2.]{AT02}}]
\label{propfpp8}
     Let $p \ge 3$ be a prime and $q$ be a power of $p$. Let $E_{\lm}$ be a supersingular elliptic curve with $\lm \in \fq \smallsetminus \{0,1 \}$. Then $-\lm \in \left( {{\mathbb{F}}_{p^2}}^{*}\right) ^8$.
\end{prop}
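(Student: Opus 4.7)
The plan is to carry out an iterated 2-descent on $E_\lambda$ over $\fpp$, using the criterion supplied by Proposition \ref{abginf2?} at each level together with the rich 2-power torsion of supersingular elliptic curves over $\fpp$.

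As a first step, I would argue that any supersingular $E_\lambda$ has $\lambda \in \fpp$. Indeed, $j(E_\lambda) \in \fpp$ for supersingular $E_\lambda$, and in the Legendre form $\lambda$ is determined by the ordered 2-torsion triple $(0,0),(1,0),(\lambda,0)$; since the $x$-coordinates $0$ and $1$ are automatically fixed by $\mathrm{Frob}_{p^2}$, the third 2-torsion point $(\lambda,0)$ must also be fixed, forcing $\lambda^{p^2}=\lambda$.

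Second, the supersingular hypothesis implies that $E_\lambda(\fpp)\cong (\bZ/m\bZ)^2$ with $m\in\{p-1,p+1\}$ for a specific sign depending on the twist class of $E_\lambda$. One of these $m$'s has 2-adic valuation at least $2$, since $8\mid(p-1)(p+1)$ for odd $p$, and by working with $E_\lambda$ in the ``correct'' twist one obtains that the 2-torsion point $(0,0)$ is divisible by $8$ in $E_\lambda(\fpp)$. Applying Proposition \ref{abginf2?} to $(0,0)$ in place of $(\lambda,0)$, the condition $(0,0)\in 2E_\lambda(\fpp)$ reads $-1,-\lambda\in(\fpp^*)^2$. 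Iterating the descent three times --- passing at each step to the 2-isogenous cover $E_\lambda/\langle(0,0)\rangle$, whose Legendre parameter is computable by the V\'elu formulas --- converts divisibility of $(0,0)$ by $8$ into successive square-root extractions of $-\lambda$, culminating in $-\lambda\in(\fpp^*)^8$.

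The main obstacle is the iteration itself: at each level the descent produces a 2-isogenous curve with a new Legendre parameter obtained by extracting a square root of the previous one, and one must verify that each such square root lies in $\fpp^*$ rather than in a larger field. A separate treatment is needed for the ``wrong'' twist in which $v_2(m)=1$, since there the 8-divisibility of $(0,0)$ is only obtained after base-changing to $\mathbb{F}_{p^4}$; one must then descend the resulting 8th root back to $\fpp$ using an explicit Galois-compatibility argument built on the rationality $\lambda\in\fpp$ established in the first step.
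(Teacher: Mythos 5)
First, a remark on the comparison itself: the paper does not prove this proposition --- it is quoted directly from Auer--Top \cite{AT02} --- so there is no in-paper argument to measure yours against, and your attempt must stand on its own. It does not. The engine of your argument is the equivalence ``$(0,0)$ is divisible by $2^k$ in $E_{\lm}(\fpp)$ $\Longleftrightarrow$ $-\lm\in(\fpp^*)^{2^k}$,'' obtained by iterating Proposition \ref{abginf2?}. That equivalence is only valid for $k=1$, and the input you feed it --- that $(0,0)$ is divisible by $8$ in $E_{\lm}(\fpp)$ --- is false for most primes. Indeed $E_{\lm}(\fpp)\cong(\bZ/m\bZ)^2$ with $m=p\mp1$, and a point of order $2$ in $(\bZ/m\bZ)^2$ lies in $8$ times the group only when $16\mid m$, which forces $p\equiv\pm1\bmod 16$. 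A concrete counterexample to the whole mechanism: for $p=5$ the supersingular Legendre parameters are $\lm=-2\pm\sqrt{3}\in\mathbb{F}_{25}$; here $E_{\lm}(\mathbb{F}_{25})\cong(\bZ/4\bZ)^2$, so $(0,0)$ is divisible by $2$ but not even by $4$, and yet $-\lm=2\mp\sqrt{3}$ is a primitive cube root of unity and hence does lie in $(\mathbb{F}_{25}^*)^8$, the subgroup of order $3$. So the two extra powers of $2$ cannot come from divisibility of $(0,0)$ in the Mordell--Weil group of the single curve $E_{\lm}$ over $\fpp$; in Auer--Top they come from the $2$-isogenous curves (which are again supersingular Legendre curves over $\fpp$) and the explicit Landen-type relation between their parameters. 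Your sketch gestures at those isogenies, but only as a device for ``extracting the roots,'' not as the source of the divisibility, which is where the actual content lies.

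Two further points. Your handling of the ``wrong twist'' cannot be repaired: if $E_{\lm}(\fpp)\cong(\bZ/m\bZ)^2$ with $v_2(m)=1$, then $2E_{\lm}(\fpp)$ has odd order, so $(0,0)\notin 2E_{\lm}(\fpp)$, and since $-1$ is always a square in $\fpp^*$ the descent criterion you invoke shows $-\lm\notin(\fpp^*)^2$ --- the conclusion of the proposition would simply be false. Ruling this case out for supersingular Legendre curves is therefore a substantive part of the theorem, not a technicality; and the proposed patch of producing an $8$th root over $\mathbb{F}_{p^4}$ and ``descending'' it is hopeless, because being a $2^k$-th power is not preserved under restriction from $\mathbb{F}_{p^4}^*$ to $\fpp^*$ (the $2$-adic valuation of the group order jumps), and in the hypothetical wrong-twist case $-\lm$ is provably not even a square in $\fpp^*$. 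Finally, your first step is circular: $\mathrm{Frob}_{p^2}$ permutes the $2$-torsion of $E_{\lm}$ only if $E_{\lm}$ is already defined over $\fpp$, which is what you are trying to prove; from $j(E_{\lm})\in\fpp$ alone one only gets that $\lm^{p^2}$ lies in the six-element orbit $\{\lm,1/\lm,1-\lm,\dots\}$, and excluding the nontrivial permutations requires a separate argument.
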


We review the enumeration of supersingular elliptic curves up to $\fp$-isomorphism by the class numbers. Let $p>3$ be a prime. It is known that for a supersingular elliptic curve $E$ over $\fp$, the endomorphism ring ${\textrm{End}}_{\fp}(E)$ over $\fp$ is isomorphic to $\bZ[\sqrt{-p}] $ or $\bZ \left[\frac{1+\sqrt{-p}}{2} \right]$. Moreover, it is known that if $p \equiv 1 \bmod 4$, the number of $\fp$-isomorphism classes of supersingular elliptic curves over $\fp$ that satisfy 
${\textrm{End}}_{\fp}(E) \cong \bZ[\sqrt{-p}] $ is equal to $h(-4p)$, and if $p \equiv 3 \bmod 4$, the number of $\fp$-isomorphism classes of supersingular elliptic curves over $\fp$ that satisfy ${\textrm{End}}_{\fp}(E) \cong \bZ \left[\frac{1+\sqrt{-p}}{2} \right]$ is equal to $ h(-p)$ (cf. Delfs and Galbraith \cite[Theorem 2.1, Theorem 2.7]{DG02}).
\\

By the fact described above and Proposition \ref{legendrecase}, we have the following result of Auer and Top.
Let $T_p:=\{ s \in \fp\smallsetminus\{0,1 \} \mid E_s\text{ is supersingular} \}$ .

\begin{prop}[Auer and Top {\cite[Proposition 3.2.]{AT02}}]
\label{numtp}
    Let $p>3$ be a prime satisfying \textcolor{black}{$ p \equiv 3 \bmod 4$}. Then $\# T_p=3h(-p)$.     
\end{prop}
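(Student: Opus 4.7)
The plan is to partition $T_p$ by $\fp$-isomorphism of $E_\lm$, reduce $\#T_p$ to the count of such classes, and identify those classes with supersingular $E/\fp$ whose endomorphism ring is the maximal order $\bZ[(1+\sqrt{-p})/2]$ via a Frobenius argument on $E[2]$.

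First, I apply Proposition \ref{legendrecase} to show that every $\fp$-isomorphism class of supersingular Legendre curve contributes exactly three values to $T_p$: in the generic case $\lm \notin \{-1, 2, 1/2\}$ the four sub-items each list three pairwise $\fp$-isomorphic Legendre parameters, and in the exceptional case $\lm \in \{-1, 2, 1/2\}$ (corresponding to $j = 1728$) the three values themselves form a single class of size three. Hence $\#T_p = 3 N_p$, where $N_p$ is the number of $\fp$-isomorphism classes of supersingular $E/\fp$ that are $\fp$-isomorphic to some Legendre curve $E_\lm$ with $\lm \in \fp$.

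Next I identify $N_p$ with the number of supersingular $E/\fp$ satisfying $\textrm{End}_{\fp}(E) \cong \bZ[(1+\sqrt{-p})/2]$. On the one hand, a Legendre model manifestly has full rational $2$-torsion; conversely, starting from $E \colon y^2 = (x - e_1)(x - e_2)(x - e_3)$ with $e_i \in \fp$, the substitution $x = (e_j - e_i) X + e_i$ followed by the rescaling $y = (e_j - e_i)^{3/2} Y$ yields a Legendre model over $\fp$ precisely when $e_j - e_i \in (\fp^*)^2$. Since $p \equiv 3 \bmod 4$ makes $-1$ a non-square, exactly three of the six orderings of $\{e_1, e_2, e_3\}$ produce a square leading coefficient, so such an $E$ is $\fp$-isomorphic to three distinct $E_\lm$'s. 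For the endomorphism-ring side, the Frobenius $\pi$ with $\pi^2 = -p$ acts on $E[2]$ as an involution (since $-p \equiv 1 \bmod 2$), and its triviality is equivalent to $(1+\pi)$ annihilating $E[2]$, equivalently to $(1+\pi)/2 \in \textrm{End}_{\fp}(E)$. This last condition makes sense because $p \equiv 3 \bmod 4$ is precisely when $(1+\sqrt{-p})/2$ is integral, and it is exactly what distinguishes the maximal order $\bZ[(1+\sqrt{-p})/2]$ from $\bZ[\sqrt{-p}]$.

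Finally, the Delfs--Galbraith count recalled above Proposition \ref{numtp} gives exactly $h(-p)$ classes with $\textrm{End}_{\fp}(E) \cong \bZ[(1+\sqrt{-p})/2]$, hence $N_p = h(-p)$ and $\#T_p = 3 h(-p)$. The main obstacle is the middle paragraph: one must simultaneously produce three Legendre presentations of an $E$ with full rational $2$-torsion using the non-squareness of $-1$, and handle the half-Frobenius carefully enough to recognize $(1+\pi)/2$ as the element extending $\bZ[\pi]$ to the maximal order.
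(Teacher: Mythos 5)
Your proof is correct and follows essentially the same route the paper takes: the paper merely cites Auer--Top and derives the count from Proposition \ref{legendrecase} together with the Delfs--Galbraith fact that there are $h(-p)$ classes with $\mathrm{End}_{\fp}(E)\cong\bZ\left[\frac{1+\sqrt{-p}}{2}\right]$, and you have supplied the implicit bridge, namely that Legendre-isomorphic $\Leftrightarrow$ full rational $2$-torsion $\Leftrightarrow$ maximal endomorphism order. The only detail left unstated is that Proposition \ref{legendrecase} requires $j(E)\neq 0$, which holds automatically for supersingular Legendre curves over $\fp$ since $j=0$ is supersingular only when $p\equiv 2\bmod 3$, whereas a Legendre parameter in $\fp$ with $j(E_\lm)=0$ would force $p\equiv 1\bmod 3$.
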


We can also deduce the next lemma.

\begin{lem}\label{spsub}
The number of $\mu \in  \fpp \smallsetminus \fp$
such that $y^2=x(x-\mu)(x-\overline{\mu})$ is supersingular is equal to $(p-1)h(-4p)$.
\end{lem}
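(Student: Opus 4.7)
My plan is to set up a $(p-1)$-to-$1$ correspondence between the $\mu$'s being counted and the $\fp$-isomorphism classes of supersingular elliptic curves $E$ over $\fp$ with $E[2](\fp) \cong \bZ/2\bZ$; the latter set will then be shown to have exactly $h(-4p)$ elements.

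The correspondence itself is easy: the curve $E_\mu : y^2 = x(x-\mu)(x-\overline{\mu})$ is defined over $\fp$ (its coefficients $\mu+\overline{\mu}$ and $\mu\overline{\mu}$ lie in $\fp$) and, since $\mu \notin \fp$, its unique nontrivial $\fp$-rational $2$-torsion point is $(0,0)$; conversely every $E/\fp$ with $E[2](\fp)\cong\bZ/2\bZ$ arises this way, after translating the rational $2$-torsion point to $(0,0)$ and factoring the residual irreducible monic quadratic over $\fpp$ as $(x-\mu)(x-\overline{\mu})$. Next, any $\fp$-isomorphism $E_\mu \to E_{\mu'}$ must fix $(0,0)$, hence has the form $(x,y)\mapsto(u^2 x,u^3 y)$ with $u \in {\fp}^{*}$, sending $\mu$ to $\mu/u^2$ or $\overline{\mu}/u^2$. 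Hence the preimage of $[E_\mu]$ in $\fpp\smallsetminus\fp$ is $\mu \cdot ({\fp}^{*})^2 \cup \overline{\mu} \cdot ({\fp}^{*})^2$, of cardinality $p-1$ unless $\mu/\overline{\mu}\in({\fp}^{*})^2$; that exceptional equality forces $\mu+\overline{\mu}=0$ (so $j(E_\mu)=1728$) together with $p\equiv 1 \bmod 4$, but then $j=1728$ is ordinary, so every supersingular class in the image has preimage of size exactly $p-1$.

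It then remains to count $\fp$-iso classes of supersingular $E/\fp$ with $E[2](\fp)\cong \bZ/2\bZ$. By the classification recalled at the end of Section \ref{subsec:2.1} (cf.\ also Delfs--Galbraith \cite{DG02}), the supersingular classes over $\fp$ split by endomorphism ring into $h(-4p)$ classes with $\mathrm{End}_{\fp}(E)\cong\bZ[\sqrt{-p}]$ and---only when $p \equiv 3 \bmod 4$---$h(-p)$ additional classes with $\mathrm{End}_{\fp}(E)\cong \bZ[(1+\sqrt{-p})/2]$. A Frobenius-mod-$2$ computation distinguishes the two types: in $\bZ[\sqrt{-p}]/2$, the Frobenius $\sqrt{-p}$ is not $\equiv 1 \bmod 2$, so it acts non-trivially on $E[2]$ and leaves precisely $\bZ/2\bZ$ fixed; whereas in $\bZ[(1+\sqrt{-p})/2]/2$ one has $\sqrt{-p}=2\omega-1\equiv 1 \bmod 2$, which acts trivially on $E[2]$ and yields the full rational $2$-torsion. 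Multiplying the count $h(-4p)$ by the preimage size $p-1$ gives $(p-1)\,h(-4p)$, as claimed. The main obstacle is this last Frobenius-on-$E[2]$ analysis, tying $\mathrm{End}_{\fp}(E)$ to the Galois structure of the $2$-torsion; the computation is short but relies on describing $E[2]$ as a module over $\mathrm{End}_{\fp}(E)/2$.
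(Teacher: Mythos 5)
Your proof is correct and follows essentially the same route as the paper's: both count the fibers of $\mu \mapsto [E_\mu]$ (each of size $2\cdot\#({\fp}^{*})^2 = p-1$) over the $h(-4p)$ many $\fp$-isomorphism classes of supersingular curves with $\mathrm{End}_{\fp}(E)\cong\bZ[\sqrt{-p}]$, as given by the Delfs--Galbraith classification recalled in Section \ref{subsec:2.1}. The paper delegates the identification of these classes via the rational $2$-torsion to ``the same discussion with Auer and Top,'' whereas you spell out that Frobenius-mod-$2$ argument and also verify explicitly that the two cosets $\mu({\fp}^{*})^2$ and $\overline{\mu}({\fp}^{*})^2$ are disjoint in the supersingular case (ruling out $j=1728$ when $p\equiv 1 \bmod 4$) --- details the paper leaves implicit.
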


\begin{proof}
    \textcolor{black}{Following a similar approach to} Auer and Top {\cite[Proposition 3.2.]{AT02}}, we see that for a supersingular elliptic curve $E:y^2=x(x-\mu)(x-\overline{\mu})$ with $\mu \in \fpp \smallsetminus \fp$, the endomorphism ring of $E$ over $\fp$ satisfies ${\text{End}}_{\fp} (E) \cong \bZ[\sqrt{-p}]$, and conversely, for a supersingular elliptic curve $E$ over $\fp$ with an endomorphism ring over $\fp$ satisfying ${\text{End}}_{\fp}(E) \cong \bZ[\sqrt{-p}]$, $E$ can be given by an equation $y^2=x(x-\mu)(x-\overline{\mu})$ with $\mu \in \fpp \smallsetminus \fp$. Since two elliptic curves $E_{1}:y^2=x(x-\mu)(x-\overline{\mu})$ and $E_{2}:y^2=x(x-\nu)(x-\overline{\nu})$ are isomorphic over $\fp$ if and only if $\mu = u^2\nu$ or $\mu=u^2\overline{\nu}$ for some $u \in {\fp}^{*}$, we conclude that
    the number of $\mu \in  \fpp \smallsetminus \fp$
such that $y^2=x(x-\mu)(x-\overline{\mu})$ is supersingular is
    $2 \cdot \# \left({\fp}^{*}\right)^2 \cdot h(-4p)=(p-1)h(-4p)$.
\end{proof}

\subsection{Genus-2 curves}\label{subsec:2.2}
Let $K$ be a field of characteristic $\ne 2$ and $\overline K$ the algebraic closure of $K$. We review some facts on genus-two curves. Igusa \cite{Igusa} classified curves of genus two into 7 types as follows: 
\begin{enumerate}
   \item[(0)]$y^2=x(x-1)(x-{\lm}_1)(x-{\lm}_2)(x-{\lm}_3)$
        \item [(1)]$y^2=x(x-1)(x-\lm)(x-\mu)\left(x-\lm\frac{1-\mu}{1-\lm} \right)$
        \item[(2)]$y^2=x(x-1)(x-\lm)\left(x-\frac{\lm-1}{\lm}\right) \left(x-\frac{1}{1-\lm}\right)$
        \item[(3)] $y^2=x(x-1)(x+1)(x-\lm)\left(x-\frac{1}{\lm}\right)$
        \item[(4)]$(p\neq 3,5) \ y^2=x(x-1)(x+1)(x-2)\left(x-\frac{1}{2}\right) $
        \item[(5)]$y^2=x(x^2-1)(x^2+1) $
        \item[(6)]$(p \neq 5) \ y^2=x(x-1)(x-1-\zeta)(x-1-\zeta-{\zeta}^2)(x-1-\zeta-{\zeta}^2-{\zeta}^3)$, where $\zeta$ is a primitive fifth root of unity
    \end{enumerate}

We focus on a genus-2 curves of the type (3):
\[
C_{\lm}:y^2=x(x-1)(x+1)(x-{\lambda}) \left(x-\frac{1}{\lambda}\right)
\]
 and recall some properties shown by Ibukiyama, Katsura and Oort in \cite{IKO02}.

For each $\lambda \in K$, we choose a square root (in $\overline K$) of $\lambda^2-1$, fix it and write it as $\sqrt{\lambda^2-1}$.
For the curve $C_{\lm}$, we consider two elliptic curves:

\begin{enumerate}

\item[] $E_{\Lambda(\lm)}: Y^2=X(X-1)(X+(\lambda-\sqrt{{\lm}^2-1})^2)$,

\item[] $E_{{\Lambda}'(\lm)}: Y^2=X(X-1)(X+(\lambda+\sqrt{{\lm}^2-1})^2)$,
\end{enumerate}
where we denote $\Lm(\lm):=-(\lambda-\sqrt{{\lm}^2-1})^2$ and ${\Lm}'(\lm):=-(\lambda+\sqrt{{\lm}^2-1})^2$. 

  Then we have morphisms of degree two from $C_{\lm}$ to the elliptic curves.

   \begin{prop}[Ibukiyama, Katsura and Oort {\cite[\S 1.2]{IKO02}}]
    \label{propmapctoe}
    The morphism $ f_1: C_{\lm} \longrightarrow E_{\Lm(\lm)}$ sending $(x,y)$ to 
    \[
    \left(- \left(\lambda x +\frac{({\lambda}^2-1)x}{x-\lambda} \right), \frac{\epsilon {\lambda}^{\frac{3}{2}}(x-(\lambda-\sqrt{{\lm}^2-1}))}{(x-\lambda)^2} y\right),
    \]
    and the morphism $f_2: C_{\lm} \longrightarrow E_{{\Lm}'(\lm)}$ sending $(x,y)$ to 
    \[
    \left( - \left(\lambda x +\frac{({\lambda}^2-1)x}{x-\lambda} \right), \frac{\epsilon {\lambda}^{\frac{3}{2}}(x-(\lambda+\sqrt{{\lm}^2-1}))}{(x-\lambda)^2} y\right)
    \]
  are well-defined and are of degree two,
    where $\epsilon$ is a square root of $-1$
    and  ${\lm}^{3/2}$ is a square root of $\lm^3$.
    
\end{prop}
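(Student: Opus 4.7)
The plan is to check the two required properties by direct computation. First I would rewrite the $X$-component of $f_1$ as a single rational function: a short algebraic manipulation gives
\[
X \;=\; -\lambda x - \frac{(\lambda^2-1)x}{x-\lambda} \;=\; \frac{-x(\lambda x - 1)}{x-\lambda},
\]
so the map on $x$-coordinates is a rational function of degree two on $\mathbb P^1$. From this the degree assertion follows formally: the composition $C_\lambda \xrightarrow{f_1} E_{\Lambda(\lambda)} \xrightarrow{X} \mathbb P^1$ agrees with the hyperelliptic projection $C_\lambda \to \mathbb P^1_x$ post-composed with $x \mapsto -x(\lambda x -1)/(x-\lambda)$, hence has total degree $2 \cdot 2 = 4$; since the second arrow has degree $2$, we get $\deg f_1 = 2$, and the same argument applies to $f_2$.

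The substantive step is verifying well-definedness, i.e.\ that $(X,Y)$ satisfies the defining equation of $E_{\Lambda(\lambda)}$. The key algebraic observation is that if we set $\alpha := \lambda - \sqrt{\lambda^2-1}$, then $\alpha \cdot (\lambda + \sqrt{\lambda^2-1}) = 1$ and, more importantly,
\[
1 + \alpha^2 \;=\; 2\lambda\alpha,
\]
because $\alpha^2 = 2\lambda^2 - 1 - 2\lambda\sqrt{\lambda^2-1}$. Using this identity, I would compute
\[
X-1 \;=\; \frac{-\lambda(x-1)(x+1)}{x-\lambda}, \qquad X + \alpha^2 \;=\; \frac{-\lambda(x-\alpha)^2}{x-\lambda},
\]
where the second identity is precisely where $1+\alpha^2 = 2\lambda\alpha$ collapses the quadratic numerator into a perfect square. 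Multiplying gives
\[
X(X-1)(X+\alpha^2) \;=\; \frac{-\lambda^2\, x(x^2-1)(\lambda x-1)(x-\alpha)^2}{(x-\lambda)^3}.
\]

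On the other side, writing $x - 1/\lambda = (\lambda x - 1)/\lambda$ and using $y^2 = x(x-1)(x+1)(x-\lambda)(x-1/\lambda)$, a direct substitution into the formula for $Y$ yields
\[
Y^2 \;=\; \frac{-\lambda^3 (x-\alpha)^2}{(x-\lambda)^4}\, y^2 \;=\; \frac{-\lambda^2\, x(x^2-1)(\lambda x-1)(x-\alpha)^2}{(x-\lambda)^3},
\]
which matches the previous expression term for term, confirming $f_1$ lands on $E_{\Lambda(\lambda)}$. The argument for $f_2$ is identical with $\alpha$ replaced by $\alpha' = \lambda + \sqrt{\lambda^2-1}$, using the symmetric identity $1 + (\alpha')^2 = 2\lambda\alpha'$.

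The one place requiring care, rather than being a true obstacle, is bookkeeping of the choices of square roots $\epsilon = \sqrt{-1}$ and $\lambda^{3/2}$: these enter only through $\epsilon^2 \lambda^3 = -\lambda^3$ in $Y^2$, so any consistent choice works and the final identity is independent of them. The behaviour at the points where the denominators vanish (namely $x=\lambda$ and the hyperelliptic ramification points) would be handled by the standard argument that a rational map from a smooth projective curve to a projective variety extends uniquely.
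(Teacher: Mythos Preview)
Your computational verification is correct: the simplification of $X$ to $-x(\lambda x-1)/(x-\lambda)$, the factorisations of $X-1$ and $X+\alpha^2$ via the identity $1+\alpha^2=2\lambda\alpha$, and the comparison of $Y^2$ with $X(X-1)(X+\alpha^2)$ all check out term for term, and your degree argument by factoring through the two hyperelliptic projections is sound.

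Note, however, that the paper does not give its own proof of this proposition at all: it is simply quoted from Ibukiyama--Katsura--Oort \cite[\S1.2]{IKO02} and used as a black box. So there is no ``paper's proof'' to compare against. What you have written is a clean, self-contained direct verification that would serve well as a substitute for the citation if one wanted the paper to be more self-contained; this is exactly the kind of elementary check that the original reference presumably contains (or leaves to the reader), and there is essentially only one way to do it.
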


Assume that the characteristic of $K$ is greater than $2$.
The next proposition enables us to reduce the problem \textcolor{black}{of} the superspeciality of $C_\lm$ to that \textcolor{black}{of} $E_{\Lambda(\lm)}$ and $ E_{{\Lambda}'(\lm)}$.
\textcolor{black}{Again we recall that a curve $C$ over a field $K$ is \textit{superspecial} if its Jacobian variety is isomorphic to a product of supersingular elliptic curves over $\overline{K}$.}

\begin{prop}[Ibukiyama, Katsura and Oort {\cite[Lemma 1.1 (\romannumeral2),\ Proposition 1.3 (\romannumeral2), (\romannumeral3)]{IKO02}}]
\label{arigataya}
The curve $C_{\lm}$ is superspecial if and only if the two associated elliptic \textcolor{black}{curves} $E_{\Lambda(\lm)}$ and $ E_{{\Lambda}'(\lm)}$ are supersingular. 
\end{prop}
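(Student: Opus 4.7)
The plan is to pass to the Jacobian and use the two degree-$2$ morphisms $f_1\colon C_\lm \to E_{\Lm(\lm)}$ and $f_2\colon C_\lm \to E_{{\Lm}'(\lm)}$ supplied by Proposition \ref{propmapctoe}. Their combination $(f_1,f_2)\colon C_\lm \to E_{\Lm(\lm)} \times E_{{\Lm}'(\lm)}$, via Albanese functoriality, induces a homomorphism of abelian surfaces
\[
\phi\colon \mathrm{Jac}(C_\lm) \longrightarrow E_{\Lm(\lm)} \times E_{{\Lm}'(\lm)}.
\]
The crux of the argument is to establish that $\phi$ is an isogeny of degree $4$, i.e.\ a $(2,2)$-isogeny.

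For this I would exploit the Klein-$4$ action on $C_\lm$: the two non-hyperelliptic involutions in $V_4$ produce two distinct elliptic quotients of $C_\lm$, which through $f_1$ and $f_2$ are identified with $E_{\Lm(\lm)}$ and $E_{{\Lm}'(\lm)}$ respectively. The pullback maps $f_1^{\ast}$ and $f_2^{\ast}$ then land in $\mathrm{Jac}(C_\lm)$ as two distinct elliptic subvarieties, so $f_1^{\ast} + f_2^{\ast}\colon E_{\Lm(\lm)} \times E_{{\Lm}'(\lm)} \to \mathrm{Jac}(C_\lm)$ is a homomorphism with finite kernel between abelian surfaces of the same dimension, hence an isogeny. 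A degree count ($\deg f_1 = \deg f_2 = 2$, together with the distinctness of the images) then confirms that $\phi$ is a $(2,2)$-isogeny.

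With $\phi$ in hand the remainder is formal. Since $\deg\phi = 4$ is coprime to $p$ (we have $p \ge 5$), $\phi$ induces an isomorphism of $p$-torsion group schemes over $\overline{K}$. Superspeciality of an abelian surface is detected entirely by its $p$-torsion, equivalently by its $a$-number being $2$, so $\mathrm{Jac}(C_\lm)$ is superspecial if and only if $E_{\Lm(\lm)} \times E_{{\Lm}'(\lm)}$ is. The latter holds if and only if both $E_{\Lm(\lm)}$ and $E_{{\Lm}'(\lm)}$ are supersingular, by the Deligne--Shioda theorem that any two products $E \times E$ of supersingular elliptic curves over $\overline{K}$ are mutually isomorphic. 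The main obstacle is the step in the second paragraph: verifying the independence of the two elliptic quotients so that $\phi$ has finite kernel. I would do so by tracking the $V_4$-action on the six Weierstrass points of $C_\lm$ and the resulting $(+,-)$ decomposition of the cotangent space of $\mathrm{Jac}(C_\lm)$, which forces the two elliptic factors to correspond to distinct eigenspaces.
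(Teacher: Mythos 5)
You should first note that the paper does not prove this proposition at all: it is quoted directly from Ibukiyama--Katsura--Oort \cite[Lemma 1.1 (ii), Proposition 1.3 (ii), (iii)]{IKO02}, so your write-up is a reconstruction of the cited argument rather than an alternative to anything in the present text. That said, your reconstruction is essentially the standard (and essentially the original) proof, and its skeleton is correct: pass to $\mathrm{Jac}(C_\lm)$, exhibit an isogeny $\phi$ onto $E_{\Lm(\lm)}\times E_{{\Lm}'(\lm)}$ of degree prime to $p$, use that the $a$-number depends only on the $p$-divisible group (so superspeciality, which for abelian surfaces is equivalent to $a=2$ by Oort, transfers across $\phi$), and conclude via $a(E\times E')=a(E)+a(E')$. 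Note that the Deligne--Shioda theorem is not actually needed in the last step: the ``if'' direction is the very definition of superspecial, and the ``only if'' direction is the additivity of $a$. The one step you leave soft --- finiteness of $\ker\phi$ --- closes more cleanly with endomorphisms than with your proposed chase through Weierstrass points and cotangent eigenspaces. The maps $f_1,f_2$ of Proposition \ref{propmapctoe} share the $x$-coordinate $X=-x(\lm x-1)/(x-\lm)$, whose deck transformation is the involution $x\mapsto(\lm x-1)/(x-\lm)$ of $\mathbb{P}^1$ (it permutes the six branch points as $0\leftrightarrow 1/\lm$, $\infty\leftrightarrow\lm$, $1\leftrightarrow-1$); hence $f_1$ and $f_2$ are the quotients of $C_\lm$ by its two lifts $\sigma$ and $\sigma\iota$, where $\iota$ is the hyperelliptic involution. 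Since $\iota_*=-1$ on the Jacobian, one gets $f_1^*f_{1*}+f_2^*f_{2*}=(1+\sigma_*)+(1+\sigma_*\iota_*)=[2]$ and $f_{i*}f_j^*=0$ for $i\neq j$ (because $(1+\sigma_*)f_2^*=0$ and a homomorphism of elliptic curves with finite image vanishes), whence $(f_{1*},f_{2*})\circ(f_1^*+f_2^*)=[2]$ on $E_{\Lm(\lm)}\times E_{{\Lm}'(\lm)}$. This shows in one stroke that both maps are isogenies and that $\deg\phi$ divides $2^4$, which is all that is needed; the precise value $4$ is true but irrelevant, so do not lean on an unproved ``degree count.'' Finally, everything must be carried out over $\overline{K}$, since $E_{\Lm(\lm)}$, $E_{{\Lm}'(\lm)}$ and the maps $f_1,f_2$ involve $\sqrt{\lm^2-1}$, $\sqrt{-1}$ and $\lm^{3/2}$; this is harmless because supersingularity and superspeciality are geometric notions.
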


\subsection{An analytic result}
We review a result on a sum of the Legendre symbols weighted by the \textcolor{black}{von Mangoldt} function $\Lambda(n)$.

\begin{lem}
\label{jutila}
Let 
\[
S(D,X):=\sideset{}{^*} \sum_{|d| \le D} \left| \sum_{3 \le n \le X} \Lambda(n) \left( \frac{d}{n}\right) \right|,
\]
where the star on the summation indicates that $d$ is not square. Then for every $C>0$ and $3 \le D \le X^{\frac{49}{50}}$, we have 
\[
S(D,X) \ll XD(\log X)^{-C}.
\]

In addition, we have

\[
\sideset{}{^*} \sum_{0\le d \le D} \left| \  \sideset{}{'} \sum_{3 \le n \le X} \Lambda(n) \left( \frac{d}{n}\right) \right| \ll XD(\log X)^{-C},
\]
where the star on the summation indicates that $d$ is a non\textcolor{black}{-}square integer and the prime indicates that $n \equiv 3 \bmod 4$.

Moreover, for $3 \le 2D \le X^{\frac{49}{50}}$, we have
\[
\sideset{}{^{**}}\sum_{0\le d \le D} \left| \ \sideset{}{''} \sum_{3 \le n \le X} \Lambda(n) \left( \frac{d}{n}\right) \right| \ll  2XD(\log X)^{-C}.
\]
where the double star on the summation indicates that we sum over integers $d$ such that $d$ and $d/2$ are not square and the double prime indicates that $n \equiv 3 \bmod 8$. We have the same result for $n \equiv 7 \bmod 8$.
\end{lem}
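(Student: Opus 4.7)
The plan is to treat the first inequality as the primary input — a known result of Jutila on mean-values of quadratic character sums twisted by the von Mangoldt function, which can be proved via Heath-Brown's large sieve for real characters combined with a Vaughan-type decomposition of $\Lambda(n)$; we may simply cite it. The remaining two inequalities will be reduced to it by encoding the congruence conditions on $n$ through Dirichlet characters modulo $4$ and $8$.

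For the second inequality, I would use, for odd $n$, the identity
\[
\mathbf{1}_{n \equiv 3 \bmod 4} = \tfrac{1}{2}\bigl(1 - \chi_4(n)\bigr),
\]
where $\chi_4$ is the nontrivial character mod $4$. Splitting the inner sum according to this identity, and using $(-1/n)=\chi_4(n)$ so that $\chi_4(n)(d/n) = (-d/n)$ for odd $n$, one obtains
\[
\sideset{}{'}\sum_{3\le n\le X}\Lambda(n)(d/n) = \tfrac12\sum_{3\le n\le X}\Lambda(n)(d/n) - \tfrac12\sum_{3\le n\le X}\Lambda(n)(-d/n) + O(\log X),
\]
where the error term absorbs the prime-power contribution from $n=2^k$. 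Taking absolute values, summing over non-square $d\in[0,D]$, and changing variables $d\mapsto -d$ in the second term folds the whole expression into the first inequality applied over $|d|\le D$, noting that a non-square non-negative integer $d$ automatically makes $-d$ non-square (since any negative integer is non-square in $\mathbb{Z}$).

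For the third inequality I would decompose the indicator of $n\equiv 3 \bmod 8$ against the four Dirichlet characters mod $8$, namely $\chi_0,\chi_4,\chi_8,\chi_{-8}$:
\[
\mathbf{1}_{n\equiv 3 \bmod 8} = \tfrac14\bigl(\chi_0(n) - \chi_4(n) - \chi_8(n) + \chi_{-8}(n)\bigr),
\]
and analogously $\mathbf{1}_{n\equiv 7 \bmod 8} = \tfrac14\bigl(\chi_0(n) - \chi_4(n) + \chi_8(n) - \chi_{-8}(n)\bigr)$. Using $(-1/n)=\chi_4(n)$, $(2/n)=\chi_8(n)$, and $(-2/n)=\chi_{-8}(n)$ for odd $n$, multiplying by $(d/n)$ reduces the inner sum to a linear combination of sums $\sum\Lambda(n)(d'/n)$ with $d'\in\{d,-d,2d,-2d\}$. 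The hypothesis that both $d$ and $d/2$ are non-squares is exactly what one needs: it guarantees that each of $\pm d$ and $\pm 2d$ is non-square (the crucial point being that $2d$ is a square precisely when $d/2$ is), so the first inequality applies to every term, and the enlarged range $|d'|\le 2D\le X^{49/50}$ matches the stated hypothesis.

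The arguments are essentially bookkeeping, and the main obstacle is verifying the Kronecker-symbol identities cleanly across all residue classes, handling the harmless $O(\log X)$ contribution from prime powers of $2$ (on which $\chi_4$ and $\chi_8$ vanish), and confirming that the non-squareness hypotheses transfer through every twist $d\mapsto \pm d,\pm 2d$ without exception. Once these checks are in place, the arbitrarily large logarithmic saving $(\log X)^{-C}$ afforded by the first inequality is preserved at the cost of an absolute multiplicative constant coming from the four-term decomposition.
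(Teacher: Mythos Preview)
Your approach is correct and follows the same underlying idea as the paper --- encode the congruence condition on $n$ via real characters and absorb the twist into the modulus $d$ --- but the packaging differs slightly. The paper cites Jutila for the first bound and Fouvry--Murty for the second, and then proves only the third: among $n\equiv 3\bmod 4$ it writes $\mathbf 1_{n\equiv 3\bmod 8}=\tfrac12\bigl(1-(2/n)\bigr)$, obtaining just two terms with moduli $d$ and $2d$, and bounds the double-starred sum by the single-starred sum over $0\le d\le 2D$ from the second inequality. You instead derive both the second and third bounds directly from the first, using full character orthogonality modulo $4$ and $8$ and getting four terms $d'\in\{\pm d,\pm 2d\}$ in the last case. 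Your route is more self-contained (it does not rely on the Fouvry--Murty citation for part two) at the cost of twice as many terms in part three; the paper's route is marginally shorter because it leverages part two as a black box. Either way the non-squareness hypotheses transfer exactly as you note, and the saving $(\log X)^{-C}$ survives.
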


\begin{proof}
    For the first part of the lemma, see Jutila \cite[Lemma 8]{Ju}.
    For the second part of the lemma, see Fouvy and Murty \cite[Lemma 6]{FM01}. We prove the last part of the lemma. We see 
\begin{eqnarray*}
    \left| \  \sideset{}{''} \sum_{3 \le n \le X} \Lambda(n) \left( \frac{d}{n}\right) \right|&=&\left|\, \sideset{}{'} \sum_{3 \le n \le X} \Lambda(n) \left( \frac{d}{n}\right) \cdot \frac{1}{2} \left(1 - \left( \frac{2}{n} \right) \right) \right|\\
    &\le& \frac{1}{2}\left|\, \sideset{}{'} \sum_{3 \le n \le X} \Lambda(n) \left( \frac{d}{n}\right) \right| +\frac{1}{2}\left|\, \sideset{}{'} \sum_{3 \le n \le X} \Lambda(n) \left( \frac{2d}{n}\right) \right|.
\end{eqnarray*}
We note that the prime on the summation on the right-hand side indicates that the sum is taken over $n \equiv 3\bmod4 $. 
Since 
\[
\sideset{}{^{**}} \sum_{0\le d \le D} \left|\, \sideset{}{'} \sum_{3 \le n \le X} \Lambda(n) \left( \frac{2d}{n}\right) \right| \le \sideset{}{^{*}} \sum_{0\le d \le 2D} \left|\, \sideset{}{'} \sum_{3 \le n \le X} \Lambda(n) \left( \frac{d}{n}\right) \right|,
\]
    we have
    \begin{eqnarray*}
       \sideset{}{^{**}} \sum_{0\le d \le D} \left| \  \sideset{}{''} \sum_{3 \le n \le X} \Lambda(n) \left( \frac{d}{n}\right) \right| \le \sideset{}{^{*}} \sum_{0\le d \le 2D} \left|\, \sideset{}{'} \sum_{3 \le n \le X} \Lambda(n) \left( \frac{d}{n}\right) \right|\ll  2XD(\log X)^{-C}.
    \end{eqnarray*}

\end{proof}


\section{The number of supersingular elliptic curves and the class numbers}\label{sec:ssing_and_classnum}
Let $p$ be a prime. In this section, we express the number of supersingular elliptic curves in a specific form with the class numbers $h(-p),h(-4p)$.
We consider the following sets related to supersingular elliptic curves, which we use in the proof of Theorem A.
We \textcolor{black}{will} use the notation \textcolor{black}{from} Sections \ref{subsec:2.1} and \ref{subsec:2.2}, for example $\Lm(\lm):=-(\lm - \sqrt{{\lm}^2-1})^2$ and ${\Lm}'(\lm):= -(\lm + \sqrt{{\lm}^2-1})^2$. In addition, $E_s$ denotes the Legendre elliptic curve given by the equation $y^2=x(x-1)(x-s)$.
\begin{enumerate}
      \item[]$S_p:=\{ {\mu}^{p-1} \mid \mu \in \fpp \smallsetminus \fp , E_{{\mu}^{p-1}} \text{ is supersingular} \}$,
    \item [] $T_p:=\{ s \in \fp\smallsetminus\{0,1 \} \mid E_s\text{ is supersingular} \}$,
    \item[] $T'_p:=T_p\smallsetminus\{-1\}$,
    \item[] $U_p:=\{-t^2 \in \fp\smallsetminus\{0,1 \} \mid t\in \fp,\ E_{-t^2} \text{ is supersingular} \}$,
    \item[] $U'_p:=U_p\smallsetminus\{-1\}$,
 \item[] ${\Theta}_p:=\left\{
 \begin{array}{cc}
       -(\lm \pm \sqrt{{\lm}^2-1})^2 
 \end{array}
\middle|  
 \begin{array}{ccc}
       \lm \in \fp \smallsetminus\{0,\ \pm1\},\\ \sqrt{{\lm}^2-1} \notin \fp ,\\
      E_{\Lm(\lm)},E_{{\Lm}'(\lm)}\text{ are supersingular}
 \end{array}
\right\}$.
\end{enumerate}
For $\mu \in \fpp \smallsetminus \fp$, let $\overline{\mu}$ be the conjugate of $\mu$ over $\fp$. Then $\overline{\mu}$ is equal to ${\mu}^p$ \textcolor{black}{since $\text{Gal}(\fpp/\fp)$ is the cyclic group of order two generated by the $p^{\text{th}}$ power map.} In addition, if \textcolor{black}{$p \equiv 3\bmod 4$}, then $T_p$ (resp.\ $U_p$) contains $-1$ so $T_p$ and $T'_p$ (resp. $U_p$ and $U'_p$) are different. 

The aim of this section is to express the cardinalities of the sets above with the class numbers. The results are as follows. 
\textcolor{black}{
\begin{prop}
    \label{classnumber}
    Assume that $p>3$.
    \begin{enumerate}
        \item [\rm{(a)}] Assume that $ p \equiv 1 \bmod 4$. Then   $\#S_p=h(-4p)$. 
        \item [\rm{(b)}] Assume that $p \equiv 3  \bmod 8 $. Then $\#U_p=3h(-p).$ 
        \item[\rm{(c)}] Assume that $ p \equiv 7 \bmod 8$. Then $\#U_p=h(-p)$.
        \item[\rm{(d)}] Assume that $p \equiv 3 \bmod 4$. Then $\#{\Theta}_p=\# U'_p$.
    \end{enumerate}
\end{prop}
}

   

   


First, we introduce an equivalence relation in the sets defined above.
\begin{defn}
\label{eqrelation}
For $s,t \in \fpp$, we say $s \approx t$ if and only if $s=t$ or $s=1/t$.
\end{defn}
 Note that if $E_s$ is $\super$, then $E_{1/s}$ is also $\super$. Thus, if $s \approx t $, then $s \in S_p$ is equivalent to $t \in S_p$. This also holds for $T_p, U_p$, and ${\Theta}_p$ respectively. For ${\Theta}_p$, we note that $\Lm(\lm)=1/{\Lm}'(\lm)$. Hence $\approx$ is an \textcolor{black}{equivalence} relation on $S_p$ (resp. $T_p$,  $U_p$, ${\Theta}_p$).


It is easy to see that for $p \ge 3$ satisfying \textcolor{black}{$p \equiv 1 \bmod 4$}, the cardinality of $S_p$ is twice of that of $S_p/\approx$, and that for $p \ge 3$ satisfying \textcolor{black}{ $p \equiv 3 \bmod 4$}, the cardinality of $T'_p$ (resp.\ $U'_p$, $\Theta_p$) is twice of that of $T'_p/\approx$ (resp.\ $U'_p/\approx$, $\Theta_p/\approx$).


\subsection{Proof of Proposition {\ref{classnumber}} (a)}
\label{kakkoitinituite}
From Lemma \ref{spsub}, we know that 
 \[
 \#\{\mu \in \fpp \smallsetminus \fp \mid E:y^2=x(x-\mu)(x-\overline{\mu}) \text{ is supersingular} \}=(p-1)h(-4p).
 \]
 Consider the map
 \[
  f: \{\mu \in \fpp \smallsetminus \fp \mid E:y^2=x(x-\mu)(x-\overline{\mu}) \text{ is supersingular} \} \longrightarrow S_p
 \]
 sending $\mu$ to $\overline{\mu}/\mu$.
 For an arbitrary $\overline{\mu}/\mu \in S_p$, we see that ${f}^{-1}(\overline{\mu}/\mu)=\{u\mu \mid u \in {\fp}^{*} \}$ and that $\#{f}^{-1}(\overline{\mu}/\mu)=p-1$.
Hence we find that $\# S_p=(p-1)h(-4p)/(p-1)=h(-4p)$.

\subsection{Proof of Proposition {\ref{classnumber}} (b)}
\begin{lem}
\label{tpup}
    Let $p$ be a prime satisfying $p \equiv3 \bmod 8$. Then $T_p=U_p$ holds.
\end{lem}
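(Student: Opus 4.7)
The plan is to prove the two inclusions $U_p \subseteq T_p$ and $T_p \subseteq U_p$ separately. The first is immediate from the definitions: any $s \in U_p$ is by definition an element of $\mathbb{F}_p \setminus \{0,1\}$ for which $E_s$ is supersingular, hence lies in $T_p$. So the real work is in the reverse inclusion.

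For $T_p \subseteq U_p$, take $s \in T_p$ and observe that we only need to show that $-s$ is a square in $\mathbb{F}_p^*$; once $-s = t^2$ for some $t \in \mathbb{F}_p$, we have $s = -t^2$ and $s \in \mathbb{F}_p \setminus \{0,1\}$, so $s \in U_p$. By Proposition~\ref{propfpp8}, $-s \in (\mathbb{F}_{p^2}^*)^8$, so it suffices to verify the following numerical lemma: if $p \equiv 3 \bmod 8$, then every element of $\mathbb{F}_p^*$ that is an $8$th power in $\mathbb{F}_{p^2}^*$ is already a square in $\mathbb{F}_p^*$.

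To prove this lemma, I would fix a generator $g$ of the cyclic group $\mathbb{F}_{p^2}^*$, so that $h := g^{p+1}$ generates $\mathbb{F}_p^*$. Write $-s = h^m = g^{m(p+1)}$. Then $-s$ is a square in $\mathbb{F}_p^*$ iff $m$ is even, while $-s$ is an $8$th power in $\mathbb{F}_{p^2}^*$ iff $m(p+1) \equiv 0 \bmod 8$ in $\mathbb{Z}/(p^2-1)\mathbb{Z}$ (note $8 \mid p^2-1$ for every odd $p$, so this reduces to $8 \mid m(p+1)$ as integers). Since $p \equiv 3 \bmod 8$ forces $p+1 \equiv 4 \bmod 8$, i.e.\ $p+1 = 4 \cdot (\text{odd})$, the condition $8 \mid m(p+1)$ is equivalent to $m$ being even, which is exactly the square condition.

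The main obstacle is just getting the $2$-adic arithmetic right: one needs to observe that the full $8$th-power condition from Proposition~\ref{propfpp8} collapses to the much weaker square condition precisely because $v_2(p+1) = 2$ when $p \equiv 3 \bmod 8$, so three of the four ``extra'' factors of $2$ in the $8$th-power condition are already supplied by $p+1$. Once this is made precise via the generator argument above, the equality $T_p = U_p$ follows immediately.
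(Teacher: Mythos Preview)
Your proof is correct and follows essentially the same strategy as the paper: both invoke Proposition~\ref{propfpp8} to get $-s \in (\mathbb{F}_{p^2}^*)^8$ and then use the hypothesis $p\equiv 3\bmod 8$ to deduce that $-s$ is a square in $\mathbb{F}_p^*$. The only difference is in the execution of that last implication: the paper writes $-s=\zeta^8$, splits into the cases $\zeta\in\mathbb{F}_p$ and $\zeta\notin\mathbb{F}_p$, and in the bad subcase derives a contradiction from $(\zeta^{p-1})^4=-1$, whereas you argue directly via a generator of $\mathbb{F}_{p^2}^*$ and the $2$-adic valuation $v_2(p+1)=2$. Your version is arguably cleaner (one minor quibble: in the informal remark at the end, $p+1$ supplies \emph{two} of the three factors of $2$ needed for $8\mid m(p+1)$, not ``three of the four'', but the formal argument you give is correct).
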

\begin{proof}
   Let $s \in T_p$. From \cite[Proposition 3.1]{AT02}, we can write $s$ as $s=-{\zeta}^8$ for some $\zeta \in \fpp$. If $\zeta \in \fp$, then we see that $s$ is in $U_p$. If $\zeta \notin \fp$, Let $\Bar{\zeta}$ be the conjugate of $\zeta$ over $\fp$. We see that
    $
    s^2=( \zeta\cdot\Bar{\zeta} )^8 $ so $ s=\pm(\zeta\cdot\Bar{\zeta})^4$.
 If $s=-(\zeta\cdot\Bar{\zeta})^4=-\left\{ (\zeta\cdot\Bar{\zeta})^2\right\}^2$, we have $s \in U_p$. Assume that $s=(\zeta\cdot\Bar{\zeta})^4$. Since $s=-{\zeta}^8$, we obtain $-1=(\Bar{\zeta}/\zeta)^4=\{{\zeta}^{p-1}\}^4$. Now we write $p=8m+3$ with $m \in \mathbb{N}$, and we have
    \[
   1= \left({\zeta}^{p-1} \right)^{p+1}= \left({\zeta}^{4(p-1)} \right)^{ 2m+1}=-1.
    \]
    This contradiction proves $T_p \subset U_p$. 
\end{proof}

\begin{proof}[Proof of Proposition \ref{classnumber} \rm{(b)}]
This follows from Lemma \ref{tpup} and Proposition \ref{numtp}.
    
\end{proof}

\subsection{Proof of Proposition{ \ref{classnumber}} (c)}

We need some preliminary steps for the proof of Proposition \ref{classnumber} (c).

\begin{lem}
\label{exist-t^2}
    Assume that $p \equiv 3\bmod 4 $, and let $s \in T_p$. Then, there exists $t\in \fp$ such that the elliptic curve $E_s$ is isomorphic to the elliptic curve $E_{-t^2}$ over $\fp$.
\end{lem}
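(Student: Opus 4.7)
The plan is to combine Proposition~\ref{legendrecase}, which describes the three-element $\fp$-isomorphism class of $E_s$ according to the squareness of $s$ and $s-1$, with the basic fact that for $p\equiv 3\bmod 4$ the element $-1$ is a quadratic non-residue. The latter has the consequence that for any $a\in\fp^*$, exactly one of $a$ and $-a$ is a square; in particular, a parameter $\lambda\in\fp^*$ has the form $-t^2$ for some $t\in\fp$ if and only if $-\lambda\in(\fp^*)^2$. Therefore the lemma reduces to exhibiting, inside the $\fp$-isomorphism class of $E_s$, at least one Legendre parameter $\lambda$ with $-\lambda$ a square.

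First I will dispose of the exceptional values $s\in\{-1,2,1/2\}$. By Proposition~\ref{legendrecase} we have $E_{-1}\cong E_{2}\cong E_{1/2}$ over $\fp$, and since $-1=-1^2$ is already of the desired shape, the choice $t=1$ works uniformly in this degenerate case.

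For the generic case $s\notin\{-1,2,1/2\}$, I will split according to whether $s$ is a square in $\fp^*$. When $s\notin(\fp^*)^2$, the remark above gives $-s\in(\fp^*)^2$, so $s=-t^2$ for some $t\in\fp^*$ and the identity already realises $E_s=E_{-t^2}$. When $s\in(\fp^*)^2$, I will instead use Proposition~\ref{legendrecase} to replace $s$ by another member of its $\fp$-isomorphism class, subdividing on whether $s-1$ is a square. If $s-1\in(\fp^*)^2$, then the class contains $E_{1/(1-s)}$, and its parameter $\lambda=1/(1-s)$ satisfies $-\lambda=1/(s-1)\in(\fp^*)^2$; if $s-1\notin(\fp^*)^2$, then $1-s=-(s-1)\in(\fp^*)^2$, and the class contains $E_{s/(s-1)}$ whose parameter $\lambda=s/(s-1)$ satisfies $-\lambda=s/(1-s)$, a product of two squares and hence a square.

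I do not anticipate a genuine obstacle: the proof is essentially a four-way table guided by Proposition~\ref{legendrecase}, together with the single arithmetic input that multiplication by $-1$ swaps squares and non-squares in $\fp^*$ when $p\equiv 3\bmod 4$. The only point requiring care is to select, in each sub-case, the correct representative of the isomorphism class among the three furnished by Proposition~\ref{legendrecase}, and to track the signs cleanly enough to confirm that $-\lambda$ lands in $(\fp^*)^2$.
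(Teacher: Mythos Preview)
Your proof is correct and follows essentially the same route as the paper's: split on whether $s$ is already of the form $-t^2$, and in the remaining case ($s$ a square) invoke Proposition~\ref{legendrecase} with a further subdivision on the squareness of $s-1$ to pick the representative $E_{1/(1-s)}$ or $E_{s/(s-1)}$. The only minor differences are cosmetic---you treat $s\in\{-1,2,1/2\}$ separately and spell out why the chosen parameters have the form $-t^2$, whereas the paper leaves these implicit but does note that $j(E_s)\neq 0$ (needed to apply Proposition~\ref{legendrecase}), a point you use tacitly.
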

\begin{proof}
   We know that $s$ is in the form of $-t^2$ or $t^2 $ for some $ t \in \fp$. If $s=-t^2$, the proposition is correct. Then suppose that $s=t^2$. Since $p \equiv 3 \bmod 4$ and $E_s$ is supersingular, it is easy to see that $j(E_{s}) \neq 0$. From Proposition \ref{legendrecase}, if $s-1$ is square in ${\fp}^{*}$, we find that $E_s$ is isomorphic to $E_{1/(1-s)}$. If $s-1$ is not square in ${\fp}^{*}$, we find that $E_s$ is isomorphic to $E_{s/(s-1)}$. This completes the proof.  
\end{proof}

We next show the following property of supersingular elliptic curves under the assumption that $p \equiv 7 \bmod 8$.

\begin{prop}
\label{t^2+1inf2}
    Assume that $p \equiv 7 \bmod 8$. Let $E$ be an elliptic curve $y^2=x(x-1)(x+t^2)$ with $ t \in \fp$. If E is supersingular, then $t^2+1$ is square in ${\fp}^{*}$.
\end{prop}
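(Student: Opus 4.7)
The plan is to pin down, by a group-theoretic argument, which nontrivial $2$-torsion point of $E$ lies in $[2]E(\fp)$, and then invoke the descent criterion underlying Proposition \ref{abginf2?} to convert that into the desired square condition on $t^{2}+1$. I would begin by observing that the three nontrivial $2$-torsion points $(0,0)$, $(1,0)$, $(-t^{2},0)$ are all defined over $\fp$, and that supersingularity together with $p\geq 7$ gives $\#E(\fp)=p+1$ (since $|a_{p}|\leq 2\sqrt{p}<p$ and $p\mid a_{p}$).

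Next I would analyse the $2$-part of $E(\fp)\cong\bZ/n_{1}\bZ\times\bZ/n_{2}\bZ$ with $n_{1}\mid n_{2}$. The Weil pairing yields $n_{1}\mid p-1$, and since $p\equiv 7\bmod 8$ gives $v_{2}(p-1)=1$ while the full $2$-torsion being $\fp$-rational gives $2\mid n_{1}$, we are forced into $v_{2}(n_{1})=1$. From $v_{2}(p+1)\geq 3$ one then deduces $v_{2}(n_{2})\geq 2$, so the $2$-Sylow subgroup of $E(\fp)$ is $\bZ/2\bZ\times\bZ/2^{b}\bZ$ with $b\geq 2$. A direct calculation with this 2-Sylow shape gives $\#\bigl([2]E(\fp)\cap E[2]\bigr)=2$: exactly one of the three nontrivial $2$-torsion points lies in $[2]E(\fp)$.

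To identify that point I apply the descent criterion, which for a $2$-torsion point $(e,0)$ on $y^{2}=\prod_{i}(x-e_{i})$ states that $(e,0)\in [2]E(\fp)$ if and only if both $e-e_{j}$ and $e-e_{k}$ are squares in ${\fp}^{*}$; this is exactly the content of Proposition \ref{abginf2?} with each of the three roots playing the role of $\lm$. For $(0,0)$ one needs $-1$ and $t^{2}$ to be squares, which fails because $-1$ is a non-square when $p\equiv 3\bmod 4$; for $(-t^{2},0)$ one needs $-t^{2}$ and $-(t^{2}+1)$ to be squares, which fails since $-t^{2}$ is a non-square. Hence the unique $2$-torsion point in $[2]E(\fp)$ is $(1,0)$, and its membership requires $1$ and $1-(-t^{2})=t^{2}+1$ both to be squares, yielding the proposition.

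The main subtlety to check is the $2$-Sylow computation: the hypothesis $p\equiv 7\bmod 8$ is used precisely to enforce $v_{2}(p+1)\geq 3$ together with $v_{2}(p-1)=1$, and it is this numerical coincidence that forces the $2$-Sylow to have a cyclic summand of order at least $4$. For $p\equiv 3\bmod 8$ the same approach would give only $v_{2}(p+1)=2$ and $2$-Sylow $\bZ/2\bZ\times\bZ/2\bZ$, and no nontrivial $2$-torsion would lie in $[2]E(\fp)$ at all — which is consistent with the genuinely different content of Proposition \ref{if3up=3h} versus the statement being proved here.
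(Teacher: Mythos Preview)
Your argument is correct, but it follows a different route from the paper's.

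The paper proceeds in two explicit steps. First, Lemma~\ref{ifker2in2} rules out $Q_0=(0,0)$ and $Q_2=(-t^2,0)$ from $[2]E(\fp)$ by direct computation: the duplication formula for $Q_0$, and a tangent-line argument for $Q_2$, both using only that $-1$ is a nonsquare when $p\equiv 3\bmod 4$. Second, Proposition~\ref{p7in2} shows $Q_1=(1,0)\in[2]E(\fp)$ by working in the quotient $H=E(\fp)/\{O,Q_1\}$: since $p\equiv 7\bmod 8$ forces $8\mid p+1$, the group $H$ has order divisible by $4$, and a short case analysis on a subgroup of order $4$ (combined with Lemma~\ref{ifker2in2}) yields a point halving $Q_1$. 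Only then is Proposition~\ref{abginf2?} invoked, once, to read off that $t^2+1$ is a square.

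Your approach replaces both steps by structure theory. The Weil-pairing constraint $n_1\mid p-1$ together with the $2$-adic valuations of $p\pm 1$ pins down the $2$-Sylow of $E(\fp)$ as $\bZ/2\bZ\times\bZ/2^b\bZ$ with $b\ge 2$, from which you read off directly that exactly one nontrivial $2$-torsion point is halved over $\fp$. You then use the descent criterion underlying Proposition~\ref{abginf2?} three times---twice contrapositively to exclude $Q_0$ and $Q_2$ (again via $-1$ being a nonsquare), and once in the forward direction for $Q_1$---arriving at the same conclusion. This is cleaner and makes transparent why $p\equiv 7\bmod 8$ is exactly the hypothesis needed (and why, as you note, the argument collapses for $p\equiv 3\bmod 8$). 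The paper's version, by contrast, is more self-contained: it avoids invoking the Weil pairing and the structure theorem, at the cost of the explicit computations in Lemma~\ref{ifker2in2}.
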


We use Proposition \ref{abginf2?} to prove Proposition \ref{t^2+1inf2}, so we first observe the 2-torsion group of the elliptic curve $E_{-t^2}$.

\begin{lem}
    \label{ifker2in2}
    Assume that $p \equiv 3 \bmod 4$ and let $E$ be an elliptic curve $y^2=x(x-1)(x+t^2)$ with $t \in \fp$. Let $Q_0:=(0,0),\ Q_1:=(1,0),\ Q_2:=(-t^2,0)$ be the points of $E$ of order two. Then, $Q_0 $ and $Q_2$ are not in $[2]E(\fp)$.
\end{lem}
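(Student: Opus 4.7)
My strategy is to apply Proposition~\ref{abginf2?} directly to $Q_2$ and, for $Q_0$, to use the tangent-line interpretation of the doubling map (this avoids the twist issues that would arise if one tried to move $Q_0$ to the distinguished $\lambda$-position by a change of variables). The key arithmetic input throughout is that $-1 \notin \left({\fp}^{*}\right)^2$ under the hypothesis $p \equiv 3 \bmod 4$, together with the observation that nonsingularity of $E$ forces $t \neq 0$ (otherwise $x(x-1)(x+t^2)$ has a double root at $x=0$).

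For $Q_2 = (-t^2, 0)$: the equation $y^2 = x(x-1)(x+t^2)$ is already of the Legendre form $y^2 = x(x-1)(x-\lambda)$ with $\lambda := -t^2$, so $Q_2$ is precisely the $\lambda$-point. Proposition~\ref{abginf2?} then says $Q_2 \in [2]E(\fp)$ iff both $-t^2$ and $-t^2 - 1$ lie in $\left({\fp}^{*}\right)^2$. Since $t \neq 0$ and $-1$ is a non-square, $-t^2 = (-1)\cdot t^2$ is a non-square, so the criterion fails and $Q_2 \notin [2]E(\fp)$.

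For $Q_0 = (0, 0)$: suppose for contradiction that some $P = (u, v) \in E(\fp)$ satisfies $2P = Q_0$. Because $Q_0$ is a 2-torsion point, $-2P = Q_0$, so the tangent line to $E$ at $P$ meets $E$ again at $(0, 0)$. Writing $f(x) := x(x-1)(x+t^2)$, this tangent is $y = \frac{f'(u)}{2v}(x - u) + v$; the condition that $(0, 0)$ lies on it gives $2 v^2 = u f'(u)$, equivalently $2 f(u) = u f'(u)$. A direct expansion yields
\[
2 f(u) - u f'(u) = -u(u^2 + t^2),
\]
so either $u = 0$ or $u^2 = -t^2$. The first case forces $P = (0, 0) = Q_0$, contradicting $2P = Q_0 \neq O$. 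The second requires $-t^2 \in \left({\fp}^{*}\right)^2$, which fails since $-1 \notin \left({\fp}^{*}\right)^2$. Hence no such $P$ exists and $Q_0 \notin [2]E(\fp)$.

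The proof is largely a short calculation: the only care required is to isolate the spurious solution $u = 0$ (where $P$ coincides with $Q_0$ and $2P = O$, not $Q_0$) from genuine doubling-preimages of $Q_0$, and to verify the identity $2 f(u) - u f'(u) = -u(u^2 + t^2)$. Once these are in place, the conclusion follows in one step from the nonresiduosity of $-1$ modulo $p$.
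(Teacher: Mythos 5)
Your proof is correct, and it reaches the two conclusions by a route that differs from the paper's in both halves, even though the arithmetic input ($-1$ is a non-square in $\fp$, and $t\neq 0$ by nonsingularity) is the same. For $Q_2$ the paper does not invoke Proposition~\ref{abginf2?}: it parametrizes the non-vertical lines $y=a(x+t^2)$ through $Q_2$, substitutes into the curve equation, and shows that the tangency condition $(a^2+1)^2+4a^2t^2=0$ has no solution $a\in\fp$. Your route --- noting that $Q_2$ is exactly the distinguished point $(\lm,0)$ of the Legendre form with $\lm=-t^2$ and reading the answer off Proposition~\ref{abginf2?} --- is shorter and is closer in spirit to how the paper itself later uses that proposition for $Q_1$ in the proof of Proposition~\ref{t^2+1inf2}; the only hypothesis to check is $-t^2\neq 0,1$, which holds since $t\neq 0$ and $-1$ is a non-square. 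For $Q_0$ the roles are reversed: the paper applies the explicit duplication formula, whose numerator factors as $(x^2+t^2)^2$, while you run the tangent-line-through-the-origin argument and arrive at the equivalent condition $u(u^2+t^2)=0$; these are the same computation in different clothing, and your isolation of the degenerate root $u=0$ is handled correctly (and $v\neq 0$ is automatic since $[2]P=Q_0\neq O$ forces $P\notin E[2]$). In short, your $Q_2$ argument buys brevity by reusing a stated preliminary, whereas the paper's is self-contained at the cost of a discriminant computation; both are valid.
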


\begin{proof}

    We first show that $Q_0$ is not in $[2]E(\fp)$. For an $\fp$-rational point $P=(x,y)$ on $E$, applying the duplication formula  $x([2]P)=(x^4-b_4x^2-2b_6x-b_8)/(4x^3+b_2x^2+2b_4x+b_6)$ (see Silverman \cite[Group Law Algorithm 2.3 (d)]{AEC}) to $b_2=-4t^2,\ b_4=-2t^2,\ b_6=0,\ b_8=-t^4$ yields
        \[
        x([2]P)= \frac{x^4+2t^2x^2+t^4}{4x^3-4t^2x^2-4t^2x}=0 .
        \]
    Then we have $x^2=-t^2$ and $x$ cannot be in $\fp$. Hence there exists no $\fp$-rational point $P$ on $E$ satisfying $[2]P=Q_0$.

    We next show that $Q_2$ is not in $[2]E(\fp)$. By the definition of group law on $E$, $Q_2=[2]P$ for some $\fp$-rational point $P$ if and only if the tangent line at $P$ on $E$ passes though $Q_2$. Note that the tangent line at $P$ is defined over $\fp$.  Let $l:y=a(x+t^2)$ with $a \in \fp$ be a line passing though $Q_2$. Substituting $y^2=a^2(x+t^2)^2$ into $y^2=x(x-1)(x+t^2)$ yields $(x+t^2)(x^2-(a^2+1)x-a^2t^2)=0$. Then $x^2-(a^2+1)x-a^2t^2$ should have double roots so that the line $l$ can be a tangent line at some point $P \in E(\fp)$. In other words, $a$ satisfies 
        $(a^2+1)^2+4a^2t^2=0$. However, since $-1$ is not square in ${\fp}^{*}$, there is no such $a\in \fp$ .    
\end{proof}

\begin{prop}
\label{p7in2}
    Assume that $p \equiv 7 \bmod 8$. Let $E$ be an elliptic curve $y^2=x(x-1)(x+t^2)$ with $t \in \fp$. Assume that $E$ is supersingular. Let $Q_0:=(0,0),\ Q_1:=(1,0),\ Q_2:=(-t^2,0)$ be the points of $E$ of order two. Then $Q_1$ is in $[2]E(\fp)$.
\end{prop}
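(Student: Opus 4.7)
The plan is to pin down the group structure of $E(\fp)$ and then identify $Q_1$ by process of elimination using Lemma \ref{ifker2in2}. Since $E$ is supersingular over $\fp$ with $p\ge 5$, the trace of Frobenius vanishes and $\#E(\fp)=p+1$; the hypothesis $p\equiv 7 \bmod 8$ therefore gives $8\mid \#E(\fp)$. The three roots $0,\ 1,\ -t^2$ of the defining cubic all lie in $\fp$, so the full $2$-torsion subgroup $E[2]\cong(\bZ/2\bZ)^{2}$ sits inside $E(\fp)$.

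Next, I would invoke the structure theorem for finite abelian groups of rank at most $2$: write $E(\fp)\cong \bZ/n_1\bZ\times \bZ/n_2\bZ$ with $n_1\mid n_2$. The inclusion $E[2]\subset E(\fp)$ forces $2\mid n_1$, while non-degeneracy of the Weil pairing together with $E[n_1]\subset E(\fp)$ yields $\mu_{n_1}\subset \fp^{*}$, hence $n_1\mid p-1$. Combining this with $n_1\mid n_1n_2=p+1$ gives $n_1\mid\gcd(p-1,p+1)=2$, so $n_1=2$ exactly and $E(\fp)\cong \bZ/2\bZ\times\bZ/(4m)\bZ$ where $4m:=(p+1)/2$ is divisible by $4$.

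Working inside this abstract group, $[2]E(\fp)$ is $\{0\}\times 2(\bZ/4m\bZ)$, and the intersection $E[2]\cap[2]E(\fp)$ has order exactly $2$, namely $\{O,(0,2m)\}$. Hence precisely one of the three non-trivial $2$-torsion points $Q_0,Q_1,Q_2$ lies in $[2]E(\fp)$. Lemma \ref{ifker2in2} (which requires only $p\equiv 3 \bmod 4$) excludes $Q_0$ and $Q_2$, so by elimination $Q_1$ must be the distinguished $2$-torsion point in $[2]E(\fp)$.

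The only non-routine input is the identification $n_1=2$: this rests on the Weil-pairing constraint that $\mu_{n_1}$ embeds into $\fp^{*}$ whenever $E[n_1]\subset E(\fp)$, a textbook fact in the arithmetic of elliptic curves. Everything downstream reduces to bookkeeping in the cyclic factor $\bZ/4m\bZ$, which is why the rather detailed hypothesis $p\equiv 7 \bmod 8$ (rather than merely $p\equiv 3 \bmod 4$) is indispensable: it is precisely what makes $4m$ even and thus forces the element $(0,2m)$ into the image of doubling.
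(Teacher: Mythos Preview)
Your argument is correct and genuinely different from the paper's. The paper never determines the full group structure of $E(\fp)$; instead it forms the quotient $H=E(\fp)/\{O,Q_1\}$, notes that $|H|=(p+1)/2$ is divisible by $4$, picks a subgroup $\widetilde{H}\subset H$ of order $4$, and then runs a short case analysis on whether $\widetilde{H}\cong(\bZ/2\bZ)^2$ or $\widetilde{H}\cong\bZ/4\bZ$, in each case producing a point $P\in E(\fp)$ with $[2]P=Q_1$ (invoking Lemma~\ref{ifker2in2} to rule out $[2]P\in\{Q_0,Q_2\}$). Your route is more structural: the Weil-pairing constraint $n_1\mid\gcd(p-1,p+1)=2$ pins down $E(\fp)\cong\bZ/2\bZ\times\bZ/((p+1)/2)\bZ$ exactly, after which the image of doubling and its intersection with $E[2]$ are transparent, and the elimination via Lemma~\ref{ifker2in2} finishes immediately. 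Your approach is cleaner and yields strictly more information (the full group structure), at the cost of importing the Weil pairing; the paper's argument is more self-contained, needing only Sylow/Cauchy-type finite group theory and the supersingular point count. One cosmetic point: in your final sentence, ``makes $4m$ even'' should read ``makes $(p+1)/2$ divisible by $4$'' (i.e., makes $m$ an integer); the substance is unaffected.
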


\begin{proof}
    We write $p=8m+7$ with an integer $m$. We treat $E(\fp)$ as an abstract group and consider the quotient group $H$ of $E(\fp)$ by the subgroup $G=\{ O,Q_1\}$. Since $E$ is supersingular, we have $|E(\fp)|=p+1$. Therefore,
    $|H|=(p+1)/2=4(m+1)$.
Clearly, there exists a subgroup $\widetilde{H}$ of $H$ such that $|\widetilde{H}|=4$. For $P\in E(\fp)$, we denote the image of $P$ in $H$ by $\overline{P}$.
   
    If $\widetilde{H} \cong \mathbb{Z}/2\mathbb{Z} \times \mathbb{Z}/2\mathbb{Z}$, then there are at least  three elements of order 2 in $H$. 
    Take  $P\in E(\fp)$ such that $\overline{P}\in \widetilde{H}$ is of order two and is neither $\overline{Q_0}$ nor $\overline{Q_2}$. Note that $[2]P$ is either of $O$ or $Q_1$. If $[2]P = Q_1$, then we have the desired assertion. If $[2]P=O$, then we have $P=Q_1$ and therefore $\overline{P}$ is of order one. This is a contradiction.

    If $\widetilde{H} \cong \mathbb{Z}/4\mathbb{Z} $, we can write $\widetilde{H}=\{ O,\overline{P},[2]\overline{P},[3]\overline{P} \}$ for some $P \in E(\fp)$. Then we have $[2]([2]\overline{P})=\overline{O}$. If $[2]([2]P)=Q_1$, We get the claim. If $[2]([2]P)=O$, then $[2]P$ is in $E[2]=\{ O,Q_0,Q_1,Q_2\}$. By Lemma \ref{ifker2in2}, we get $[2]P=Q_1$.
\end{proof}
Now Proposition \ref{t^2+1inf2} immediately follows from Proposition \ref{abginf2?}.
\begin{proof}[Proof of Proposition \ref{t^2+1inf2}]
    By Proposition \ref{p7in2}, applying Proposition \ref{abginf2?} to $\alpha=0,\ \beta=-t^2,\ \gamma=1$, we see that $\gamma-\beta=1+t^2$ is square in $\fp$.
\end{proof}

Now we are ready to prove Proposition \ref{classnumber} (c).
\begin{proof}[Proof of Proposition \ref{classnumber} \rm{(c)}]
Proposition \ref{legendrecase} says that for a supersingular elliptic curve $E_s$ with $s \in T_p$, we have three elliptic curves in the Legendre form which are isomorphic to $E_s$ over $\fp$. Now, we show that exactly one of the three is in the form of $E_{-t^2}$, which indicates the cardinality of $T_p$ is equal to three times the cardinality of $U_p$.

 Let $E_{-s^2}$ and $ E_{-t^2}$ be elliptic curves with $-s^2$ and $-t^2 \in U_p$. Assume that they are isomorphic over $\fp$. Our aim is to show $-t^2=-s^2$. 
It is easy to see that $j(E_{-s^2})$ and $j(E_{-t^2})$ are non-zero.
    
    If $j(E_{-s^2})\neq 1728$, we see that $-s^2$ is neither $-1$, $2$, nor $1/2$. In addition, By Proposition \ref{t^2+1inf2}, we find that $-(s^2+1)$ is not square in $\fp$. Hence, from Proposition \ref{legendrecase}, $E_{-t^2}$ is equal to one of $E_{-s^2},\ E_{1+1/{s^2}}$, or $E_{s^2/(s^2+1)}$. However, we know that $1+1/s^2$ and $s^2/(s^2+1)$ are square in $\fp$. Then we get $-t^2=-s^2$.
    
    If $j(E_{-s^2})=1728$, then we see that $-s^2,-t^2=-1,\ 2$, or $1/2$. Since $p \equiv 7 \bmod 8$ and $2$ is square in $\fp$, we get $-s^2=-t^2=-1$.
    
    From the discussion above, we find that $\# U_p=\# T_p/3=h(-p)$.
\end{proof}

\subsection{Proof of Proposition \ref{classnumber} (d)}
\label{kakkogonituite}
We prove Proposition \ref{classnumber} (d). Assume that $p \equiv 3 \bmod 4$. Since we know that the cardinality of ${U'}_p$, we make a one-to-one correspondence between ${\Theta}_p$ and $U'_p$ to determine the cardinality of ${\Theta}_p$. Let $E$ be an elliptic curve $y^2=x^3+ax^2+bx$. Consider the elliptic curve $E': Y^2=X^3-2aX^2+(a^2-4b)X$. We have the isogeny $\theta:E \longrightarrow E'$ sending $(x,y)$ to
\[
\left( \frac{y^2}{x^2}, \frac{y(b-x^2)}{x^2} \right).
\]
This induces an isomorphism between $E/{\textrm{Ker}(\theta)}$ and $E'$ with $\text{ker}(\theta)=\{ O, (0,0)\}$ (cf.\ Silverman \cite[Proposition 10.4.9]{AEC}). We make a one-to-one correspondence between ${\Theta}_p$ and $U'_p$ through this isogeny.
For a supersingular elliptic curve $E_{-t^2}$ with $-t^2 \in U_p$, the corresponding elliptic curve by $\theta$ is
\begin{eqnarray*}
    Y^2&=&X \left(X- \left( (t^2-1)+2t\sqrt{-1} \right) \right) \left(X- \left( (t^2-1)-2t\sqrt{-1} \right) \right),
\end{eqnarray*}
where we choose a square root of $-1$, fix it and write it as $\sqrt{-1}$. Let $\nu(t):=(t^2-1)+2t\sqrt{-1} $ and its conjugate $\overline{\nu(t)}:=(t^2-1)-2t\sqrt{-1}$. Note that $E_{{\nu}^{p-1}}$ is a supersingular elliptic curve. Now we define a map
   \[
   {\Psi}:U'_p/ \approx \longrightarrow {\Theta}_{p} / \approx
   \]
 by sending the class of $-t^2$ to the class of 
    $\overline{\nu(t)}/ \nu(t) \approx \overline{\nu(-t)}/ \nu(-t)$.
We have to check that $\overline{\nu(t)}/\nu(t) \in {\Theta}_p$.
A direct calculation shows that
\[
\overline{\nu(t)}/\nu(t) = \frac{t^4-6t^2+1-4t(t^2-1)\sqrt{-1}}{(t^2+1)^2}.
\]
Let $\lm :=2t/(t^2+1)$ and we have ${\lm}^2-1=-(t^2-1)^2/(t^2+1)^2$. Then we find that 
$\overline{\nu(t)}/ \nu(t) \approx-(\lm-\sqrt{{\lm}^2-1})^2$, which is independent from the choice of $\sqrt{{\lm}^2-1}$.
Therefore $\overline{\nu(t)}/\nu(t) $ belongs to ${\Theta}_p$.
\begin{prop}
    \label{thetaup}
    The map $\Psi$ is \textcolor{black}{a} bijection.
\end{prop}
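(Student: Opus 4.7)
The plan is to prove surjectivity and injectivity of $\Psi$ separately, exploiting the identity
\[
\overline{\nu(t)}/\nu(t) \approx -\left(\lm_t - \sqrt{\lm_t^2-1}\right)^2, \qquad \lm_t := \frac{2t}{t^2+1},
\]
established just above the statement of the proposition.

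For surjectivity, pick $[\zeta] \in \Theta_p/\approx$ and write $\zeta = -(\lm_0 - \sqrt{\lm_0^2-1})^2$ with $\lm_0 \in \fp \smallsetminus \{0, \pm 1\}$ and $\sqrt{\lm_0^2-1} \notin \fp$. I solve $\lm_0 = 2t/(t^2+1)$, i.e., the quadratic $\lm_0 t^2 - 2t + \lm_0 = 0$, whose discriminant $4(1-\lm_0^2)$ is the product of two non-squares in $\fp^{*}$: namely $-1$ (a non-square because $p \equiv 3 \bmod 4$) and $\lm_0^2 - 1$ (non-square by hypothesis). Hence $1 - \lm_0^2$ is a non-zero square in $\fp^{*}$, so there exists $t_0 \in \fp$ with $\lm_{t_0} = \lm_0$; the exclusions $t_0 \neq 0, \pm 1$ follow from $\lm_0 \neq 0, \pm 1$. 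Supersingularity of $E_{-t_0^2}$ transfers via the degree-two isogeny $\theta$ of Section \ref{kakkogonituite}: the image $\theta(E_{-t_0^2})$ is $Y^2 = X(X-\nu(t_0))(X-\overline{\nu(t_0)})$, which is $\overline{\fp}$-isomorphic to the Legendre curve $E_{\overline{\nu(t_0)}/\nu(t_0)}$. Since $\overline{\nu(t_0)}/\nu(t_0) \approx \Lm(\lm_0) = \zeta$ and $E_{\Lm(\lm_0)}$ is supersingular by definition of $\Theta_p$, the curve $E_{-t_0^2}$ is supersingular as well. Thus $-t_0^2 \in U'_p$ and $\Psi([-t_0^2]) = [\zeta]$.

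For injectivity, suppose $\Psi([-s^2]) = \Psi([-t^2])$, i.e.\ $-(\lm_s - \sqrt{\lm_s^2-1})^2 \approx -(\lm_t - \sqrt{\lm_t^2-1})^2$. Setting $u_\mu := \mu - \sqrt{\mu^2 - 1}$, the identity $u_\mu(\mu + \sqrt{\mu^2-1}) = 1$ gives $u_\mu + u_\mu^{-1} = 2\mu$. The two cases of $\approx$, namely $u_{\lm_t}^2 = u_{\lm_s}^2$ and $u_{\lm_t}^2 u_{\lm_s}^2 = 1$, force $u_{\lm_t} \in \{\pm u_{\lm_s}, \pm u_{\lm_s}^{-1}\}$, whence $\lm_t = \pm \lm_s$ on applying $\mu = (u_\mu + u_\mu^{-1})/2$. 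Rewriting $2t/(t^2+1) = \pm 2s/(s^2+1)$ as $(t \mp s)(ts \mp 1) = 0$ yields $t \in \{\pm s, \pm s^{-1}\}$, and so $-t^2 \approx -s^2$ in $U'_p$. The main subtlety throughout is handling the multi-valued square root $\sqrt{\mu^2-1}$, which enters both the definition of $\Theta_p$ and the relation $\approx$; the symmetric identity $2\mu = u_\mu + u_\mu^{-1}$ is the key algebraic fact that eliminates the sign ambiguity and reduces both directions to short computations.
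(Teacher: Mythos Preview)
Your proof is correct and follows the same overall strategy as the paper: surjectivity by inverting $\lm = 2t/(t^2+1)$, and injectivity by reducing to a polynomial factorisation giving $t \in \{\pm s, \pm s^{-1}\}$. The differences are only in execution. For injectivity, the paper works directly with $\nu(t),\nu(s)\in\fpp$, observing that $\overline{\nu(t)}\,\nu(s)\in\fp$ forces $s(t^2-1)=t(s^2-1)$ (and similarly in the second case); you instead pass through the $\lm$-parameters and use the symmetric identity $u_\mu+u_\mu^{-1}=2\mu$ to obtain $\lm_t=\pm\lm_s$ before factoring. For surjectivity, the paper writes $t=(1-\sqrt{1-\lm^2})/\lm$ and computes $\nu(t)$ explicitly, while you invoke the identity $\overline{\nu(t)}/\nu(t)\approx -(\lm_t-\sqrt{\lm_t^2-1})^2$ already established; you are also more explicit than the paper in checking that the constructed $t_0$ actually lies in $U'_p$ (supersingularity of $E_{-t_0^2}$ via the isogeny $\theta$, and the exclusions $t_0\neq 0,\pm 1$), a step the paper leaves to the reader.
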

\begin{proof}
 First we show that $\Psi$ is injective. Assume $\Psi(-t^2) \approx \Psi(-s^2)$ for $-t^2,\ -s^2 \in U'_p/\approx$. If we denote $\nu:=\nu(t)$ and $ {\nu}':=\nu(s)$, we have $\overline{\nu}/\nu=\overline{{\nu}'}/{\nu}' \text{ or } \nu/\overline{\nu}=\overline{{\nu}'}/{\nu}'$. If $\overline{\nu}/\nu=\overline{{\nu}'}/{\nu}'$, then $\overline{\nu}{\nu}'=\nu \overline{{\nu}'}=\overline{\overline{\nu}{\nu}'}$ and there exists a $u\in \fp$ such that $\overline{\nu}{\nu}'=u$. Then we have $s(t^2-1)=t(s^2-1)$ \textcolor{black}{so} $(t-s)(st+1)=0$. If $t=s$, we get $-t^2=-s^2$. If $t=-1/s$, we get $-t^2=-1/s^2$. Thus $\Psi$ is injective. If $\nu/\overline{\nu}=\overline{{\nu}'}/{\nu}'$, the same discussion gives $(st-1)(s+t)=0$, whence $\Psi$ is injective.
 
Next we show that $\Psi$ is surjective. Let take the class of $-(\lm-\sqrt{{\lm}^2-1})^2 \in {\Theta}_p/\approx$.
    Note that $1-{\lm}^2$ is square in $\fp$ since ${\lm}^2-1$ is not square in $\fp$. For this $\lm$, let $t:=(-\sqrt{1-{\lm}^2}+1)/\lm$. Then a calculation shows that $\nu(t)=u( 1+\lm/\sqrt{{\lm}^2-1} )$, where we put $u:=2\sqrt{1-{\lm}^2}(\sqrt{1-{\lm}^2}-1)/{\lm}^2$. Thus we find that $\overline{\nu(t)}/\nu(t) \approx -\left( \lm -\sqrt{{\lm}^2-1} \right)^2$, and that $\Psi$ is surjective.
\end{proof}

  Now we are ready to prove Proposition \ref{classnumber} (d).
\begin{proof}[Proof of Proposition \ref{classnumber} \rm{(d)}.]
    From Proposition \ref{thetaup}, we see that the cardinality of ${\Theta}_p \ /\approx$ is equal to the cardinality of $U'_p \ /\approx$. Hence, we have $\# {\Theta}_p=\# U'_p$.
\end{proof}

\begin{cor}
\label{corsp837}
If $p \equiv 3 \bmod 8$, then $\# {\Theta}_p$ is equal to $3h(-p)-1$. If $p \equiv 7  \bmod 8$, then $\# {\Theta}_p$ is equal to $h(-p)-1$.
\end{cor}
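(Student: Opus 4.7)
The plan is to combine the results already established. By Proposition \ref{number77}, under the hypothesis $p \equiv 3 \bmod 4$ (which covers both residue classes in the corollary), we have the identity $\#\Theta_p = \#U'_p$. Since $U'_p = U_p \setminus \{-1\}$ by definition, the task reduces to computing $\#U_p$ and then subtracting $1$, provided we can verify that $-1$ actually lies in $U_p$.

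First I would check that $-1 \in U_p$ whenever $p \equiv 3 \bmod 4$. For this one needs $-1 = -t^2$ with $t \in \fp$ (taking $t = 1$) and $E_{-1} : y^2 = x(x-1)(x+1)$ supersingular. This elliptic curve has $j$-invariant $1728$, and it is classical that the curve $y^2 = x^3 - x$ is supersingular precisely when $p \equiv 3 \bmod 4$. In particular, $-1 \in U_p$ in both cases treated by the corollary, so $\#U'_p = \#U_p - 1$.

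Next I would assemble the pieces. In the case $p \equiv 3 \bmod 8$, Proposition \ref{if3up=3h} gives $\#U_p = 3h(-p)$, whence
\[
\#\Theta_p = \#U'_p = \#U_p - 1 = 3h(-p) - 1.
\]
In the case $p \equiv 7 \bmod 8$, Proposition \ref{p87u} gives $\#U_p = h(-p)$, whence
\[
\#\Theta_p = \#U'_p = \#U_p - 1 = h(-p) - 1.
\]

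There is no real obstacle here since all the substantive work has been done in the preceding propositions; the only point requiring a brief justification is that $-1$ belongs to $U_p$ under the standing hypothesis $p \equiv 3 \bmod 4$, which is immediate from the supersingularity of $y^2 = x^3 - x$ in that case.
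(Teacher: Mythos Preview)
Your proof is correct and follows essentially the same route as the paper: the paper's proof simply cites Proposition~\ref{thetaup} (equivalently Proposition~\ref{number77}) together with Propositions~\ref{if3up=3h} and~\ref{p87u}, and the fact that $-1\in U_p$ when $p\equiv 3\bmod 4$ is already noted in the paper immediately after the definitions of $T_p,T'_p,U_p,U'_p$. Your only addition is spelling out why $-1\in U_p$, which is a welcome clarification rather than a different argument.
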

\begin{proof}
    This follows from Proposition \ref{thetaup}, Proposition \ref{classnumber} (b), and Proposition \ref{classnumber} (c).
\end{proof}

\section{Proof of Theorem A}
In this section, we prove Theorem A. Let us restate it:

\label{theorema}
\thmA*



Our aim is to determine the cardinality of
\[
{\Sigma}_p:=\left\{ \lm \in \fp \smallsetminus\{0,\ \pm1 \} \mid  C_{\lm} \text{ is superspecial} \right\}.
\]
Note that $\# {\Sigma}_p={\psi}_{p}$.
Now we introduce an equivalence relation \textcolor{black}{on} ${\Sigma}_p$.

\begin{defn}
\label{eqrelationc}
     For $\lm,\ \llm \in {\Sigma}_p$, we say $\lm \sim \llm$ if and only if $\lm= \llm$ or $\lm=-\llm$.
\end{defn}
By Proposition \ref{arigataya}, $C_{\lm}$ is superspecial if and only if $C_{-\lm}$ is superspecial. 
Obviously, the cardinality of ${\Sigma}_p$ is twice of that of ${\Sigma}_p  / \sim$.

The following map $\Phi$ enables us to reduce our problem to
the problem of enumerating supersingular
elliptic curves. Let
\begin{eqnarray*}
 \Phi: {\Sigma}_{p}/\sim \ \longrightarrow \ \left\{ s \in \fpp \smallsetminus \{ 1,0\} \mid E_s \text{ is supersingular} \right\}/\approx 
\end{eqnarray*}
be the map sending the class of $\lm$ to the class of $\Lm(\lm) \approx {\Lm}'(\lm)$, which is well-defined by Proposition \ref{propmapctoe}. Here $\Lm(\lm)$ and $ {\Lm}'(\lm)$ are the notations defined in $\S \ref{subsec:2.2}$.
\begin{prop}
\label{injective}
    The map $\Phi$ is injective.
\end{prop}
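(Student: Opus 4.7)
The plan is to reparameterize $\Lambda(\lambda)$ and $\Lambda'(\lambda)$ by a single variable so that the equivalence $\approx$ translates into a clean algebraic condition. For $\lambda \in \Sigma_p$, set
\[
u := \lambda - \sqrt{\lambda^2-1}.
\]
Since $u \cdot (\lambda + \sqrt{\lambda^2-1}) = \lambda^2 - (\lambda^2-1) = 1$, we have $u \neq 0$, $1/u = \lambda + \sqrt{\lambda^2-1}$, and consequently
\[
\Lambda(\lambda) = -u^2, \qquad \Lambda'(\lambda) = -\tfrac{1}{u^2}, \qquad \lambda = \tfrac{1}{2}\bigl(u + \tfrac{1}{u}\bigr).
\]
In particular $\Phi([\lambda])$ is represented by $-u^2$, and the relation $\Lambda(\lambda) \approx \Lambda'(\lambda)$ is built in because $(-u^2)(-1/u^2) = 1$.

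Before proving injectivity I would record two compatibility checks. First, changing the fixed square root to $-\sqrt{\lambda^2-1}$ replaces $u$ by $1/u$, which sends $-u^2$ to $-1/u^2 \approx -u^2$, so the class of $\Phi$ does not depend on the choice. Second, replacing $\lambda$ by $-\lambda$ replaces $u$ by $-1/u$ (the two roots of $t^2 - 2\lambda t + 1 = 0$ are negated and swapped when $\lambda \mapsto -\lambda$), again preserving $[-u^2] = [-1/u^2]$; thus $\Phi$ descends to $\Sigma_p/\!\sim$.

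For injectivity, suppose $\Phi([\lambda]) = \Phi([\lambda'])$, and set $v := \lambda' - \sqrt{\lambda'^2-1}$. The hypothesis means $-u^2 \in \{-v^2,\,-1/v^2\}$, equivalently
\[
u^2 = v^2 \qquad \text{or} \qquad (uv)^2 = 1.
\]
In the first case $u = \pm v$, hence $\lambda = \tfrac{1}{2}(u+1/u) = \pm\tfrac{1}{2}(v+1/v) = \pm\lambda'$. In the second case $uv = \pm 1$; if $uv = 1$ then $v = 1/u$ and $\lambda' = \tfrac{1}{2}(v+1/v) = \lambda$, while if $uv = -1$ then $v = -1/u$ and $\lambda' = -\lambda$. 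In every branch $\lambda \sim \lambda'$, proving $\Phi$ is injective.

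The only real obstacle is the bookkeeping around the non-canonical choice of $\sqrt{\lambda^2-1}$ (and the parallel choice for $\lambda'$); once one verifies that the class $[-u^2]$ under $\approx$ absorbs both the sign ambiguity of the square root and the $\sim$-ambiguity $\lambda \leftrightarrow -\lambda$, the injectivity reduces to the one-line algebraic computation above. Note that the argument never uses supersingularity of $E_{\Lambda(\lambda)}$, so the statement is in fact purely about the parameterization $\lambda \mapsto [\Lambda(\lambda)]$; the supersingular hypothesis is only needed implicitly to know $\Phi$ takes values in the stated set, which is guaranteed by Proposition~\ref{propmapctoe} and Proposition~\ref{arigataya}.
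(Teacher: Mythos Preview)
Your proof is correct and follows essentially the same approach as the paper: both arguments reduce to the elementary algebraic fact that $-(\lambda-\sqrt{\lambda^2-1})^2 \approx -(\lambda'-\sqrt{\lambda'^2-1})^2$ forces $\lambda=\pm\lambda'$. Your substitution $u=\lambda-\sqrt{\lambda^2-1}$ (so that $\lambda=\tfrac12(u+1/u)$ and $\Phi$ sends $[\lambda]$ to $[-u^2]$) is just a cleaner packaging of the paper's direct factorization $(\lambda-\sqrt{\lambda^2-1}\pm(\lambda'-\sqrt{\lambda'^2-1}))=0$, and your explicit well-definedness checks make the bookkeeping more transparent without changing the underlying argument.
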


\begin{proof}
   Assume that $\Lm(\lm) \approx \Lm(\llm)$. Then
 $\Lm(\lm)=\Lm(\llm)$ or $\Lm(\lm)=1/\Lm(\llm)=\Lm(-\llm)$. Assume that $-(\lambda-\sqrt{{\lambda}^2-1})^2=-(\llm-\sqrt{{\llm}^2-1})^2$. Then we get 
 \[
 (\lambda-\sqrt{{\lambda}^2-1}+\llm-\sqrt{{\llm}^2-1}) (\lambda-\sqrt{{\lambda}^2-1}-\llm+\sqrt{{\llm}^2-1})=0.
 \]
If $\lambda-\sqrt{{\lambda}^2-1}+\llm-\sqrt{{\llm}^2-1}=0$, then we get $\lm=-\llm$.
Similarly, if $\lambda-\sqrt{{\lambda}^2-1}-\llm+\sqrt{{\llm}^2-1}=0$, then we get $\lm=\llm$. 
Hence $\lm \sim \llm$.
\end{proof}

  

    
We determine the image of $\Phi$ by dividing into cases according to $p \bmod 8$\textcolor{black}{.}

\subsection{The case of $p \equiv 1 \bmod 4$}
We determine the image of the map $\Phi$ in the case of $p \equiv 1 \bmod 4$. 
\begin{prop}
\label{propnotinfp}
    Assume that $p \equiv 1 \bmod 4$ and $E_{\Lm(\lm)}$ is supersingular. Then ${\lambda}^2-1$ is not square in ${\fp}^{*}$.
\end{prop}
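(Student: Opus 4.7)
The plan is to derive a contradiction from the assumption that $\lm^2-1$ is a square in $\fp$, by combining the Legendre shape of $E_{\Lm(\lm)}$ with the Hasse bound for supersingular reduction over $\fp$.

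First I would suppose $\sqrt{\lm^2-1}\in\fp$. Then $\Lm(\lm)=-(\lm-\sqrt{\lm^2-1})^2$ lies in $\fp$, so $E_{\Lm(\lm)}$ is defined over $\fp$ and its three points of order two $(0,0)$, $(1,0)$, $(\Lm(\lm),0)$ are all $\fp$-rational. A brief side check using $\lm\in\fp\setminus\{0,\pm 1\}$ confirms that $\Lm(\lm)\ne 0,1$ (the first because $\lm=\sqrt{\lm^2-1}$ would give $0=-1$; the second, after squaring the equation $\lm-\sqrt{\lm^2-1}=\pm\sqrt{-1}$, reduces to $\lm=0$, which is excluded). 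Hence $E_{\Lm(\lm)}$ is a genuine Legendre elliptic curve over $\fp$, and $E_{\Lm(\lm)}(\fp)$ contains the Klein four group $E_{\Lm(\lm)}[2]$, so in particular $4\mid \#E_{\Lm(\lm)}(\fp)$.

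On the other hand, supersingularity of $E_{\Lm(\lm)}$ over $\fp$ forces the trace of Frobenius to vanish, whence $\#E_{\Lm(\lm)}(\fp)=p+1$. Since $p\equiv 1\bmod 4$, we have $p+1\equiv 2\bmod 4$, which is not divisible by $4$. This contradicts the divisibility obtained in the previous paragraph, and the proposition follows. No step poses any real obstacle; the only item requiring any care at all is the verification that $\Lm(\lm)$ is an admissible Legendre parameter, which I would dispatch in a single sentence as above.
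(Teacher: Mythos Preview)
Your proof is correct and follows essentially the same approach as the paper's: both derive a contradiction by observing that, under the assumption $\sqrt{\lm^2-1}\in\fp$, a suitable $\fp$-model of the curve has full rational $2$-torsion, while supersingularity over $\fp$ with $p\equiv 1\bmod 4$ gives $\#E(\fp)=p+1\equiv 2\bmod 4$. The only cosmetic difference is that the paper passes to the $\fp$-model $w^2=v\bigl(v-(1+\lm/\sqrt{\lm^2-1})\bigr)\bigl(v-(1-\lm/\sqrt{\lm^2-1})\bigr)$, whereas you work directly with the Legendre form $E_{\Lm(\lm)}$, which is already over $\fp$ under the contradictory hypothesis.
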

\begin{proof}
    $E_{\Lm(\lm)}$ is isomorphic to the elliptic curve $E'_{\Lm(\lm)}$ given by the equation 
    \[
    w^2=v\left(v-\left(1+\frac{\lambda}{\sqrt{{\lambda}^2-1}}\right)\right) \left(v-\left(1-\frac{\lambda}{\sqrt{{\lambda}^2-1}}\right) \right),
    \] which is defined over $\fp$. Then $E'_{\Lm(\lm)}$ also is $\super$ and we find that $\# E'_{\Lm(\lm)}(\fp)=p+1$. If $\sqrt{{\lambda}^2-1}$ is in $\fp$, then all 2-torsion points on $E'_{\Lm(\lm)}$ are $\fp$-rational and the cardinality of the two-torsion group is $4$. The two-torsion group $E'_{\Lm(\lm)}[2]$ is a subgroup of $E'_{\Lm}(\fp)$, but since $4 \nmid (p+1)$, the group $E'_{\Lm}(\fp)$ does not have a subgroup of order 4. Hence $\sqrt{{\lambda}^2-1}$ is not in $\fp$.
\end{proof}

If we denote $\mu(\lm):={\lambda}/{\sqrt{{\lambda}^2-1}}+1$ and $ {\mu}'(\lm):=\overline{\mu(\lm)}={-\lambda}/{\sqrt{{\lambda}^2-1}}+1$, we see that $\mu(\lm)$ is not in $\fp$ from Proposition \ref{propnotinfp}. In addition, we have
\begin{eqnarray*}
  &&  \Lm(\lm)=-(\lambda-\sqrt{{\lambda}^2-1})^2=\overline{\mu(\lm)} / \mu(\lm)={\mu(\lm)}^{p-1},\\
  &&  {\Lm}'(\lm)=-(\lambda+\sqrt{{\lambda}^2-1})^2={{\mu}'(\lm)}^{p-1}.
\end{eqnarray*}

Then we have the following result.
\begin{prop}
\label{resp1}
Assume that $p \equiv 1 \bmod4$. Then the image of $\Phi$ is $S_{p}/\approx$. 
\end{prop}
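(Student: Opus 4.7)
The plan is to establish both inclusions between the image of $\Phi$ and $S_p/\approx$.

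The inclusion $\mathrm{Image}(\Phi) \subseteq S_p/\approx$ is immediate: for $\lm \in {\Sigma}_p$, Proposition~\ref{propnotinfp} forces $\sqrt{\lm^2-1}\notin\fp$, so $\mu(\lm)\in\fpp\smallsetminus\fp$; then the identity $\Lm(\lm)=\mu(\lm)^{p-1}$ together with Proposition~\ref{arigataya} (which gives $E_{\Lm(\lm)}$ supersingular) yields $\Lm(\lm)\in S_p$.

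For surjectivity I would use the identity $\Lm(\lm)+1/\Lm(\lm)=2-4\lm^2$, obtained by expanding $-(\lm-\sqrt{\lm^2-1})^2 - (\lm+\sqrt{\lm^2-1})^2$. A preimage $\lm$ of $s\in S_p$ must therefore satisfy $\lm^2=(2-t)/4$, where $t:=s+1/s=s+\overline{s}\in\fp$ (the equality $\overline{s}=1/s$ holds because $s=\mu^{p-1}$ has norm $1$). Write $s=\overline{\mu}/\mu$ with $\mu\in\fpp\smallsetminus\fp$ and $E:y^2=x(x-\mu)(x-\overline{\mu})$ supersingular (as in the proof of Lemma~\ref{spsub}), and set $T=\mu+\overline{\mu}$ and $N=\mu\overline{\mu}$. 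A direct calculation gives $2-t=(4N-T^2)/N$. Since $(\mu-\overline{\mu})^2\in\fp^*$ is a non-square (being the square of a pure-imaginary element of $\fpp$) and $-1\in(\fp^*)^2$ under $p\equiv 1\bmod 4$, the numerator $4N-T^2=-(\mu-\overline{\mu})^2$ is also a non-square; thus $(2-t)/4$ is a nonzero square in $\fp^*$ if and only if $N(\mu)$ is a non-square in $\fp^*$.

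The main obstacle is the claim $N(\mu)\notin(\fp^*)^2$, which I would prove via the 2-divisibility of the point $(0,0)$ on $E$. Since $E$ is supersingular over $\fp$, $\#E(\fp)=p+1$ and $E(\fp)[2]=\{O,(0,0)\}$ (the other two roots $\mu,\overline{\mu}$ not being in $\fp$); hence $\lvert 2E(\fp)\rvert=(p+1)/2$, which is odd because $p\equiv 1\bmod 4$, so $2E(\fp)$ has no 2-torsion and $(0,0)\notin 2E(\fp)$. On the other hand, if $N(\mu)=k^2$ were a square in $\fp^*$, the equation $2P=(0,0)$ with $P=(x,y)$ would reduce, via the relation $2f(x)=xf'(x)$ for $f(x)=x^3-Tx^2+Nx$, to $x^2=N$, i.e., $x=\pm k$; and $f(k)f(-k)=k^4(T^2-4N)$ is a non-square in $\fp^*$, so exactly one of $f(\pm k)$ is a nonzero square, yielding an $\fp$-rational $P$ with $2P=(0,0)$---a contradiction. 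Hence $N(\mu)\notin(\fp^*)^2$.

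With $N(\mu)$ a non-square established, $\lm\in\fp^*$ exists with $\lm^2=(2-t)/4$, and $\lm\neq\pm 1$ since $t\neq -2$ (otherwise $s=-1\notin S_p$, because $E_{-1}$ has $j$-invariant $1728$, supersingular only for $p\equiv 3\bmod 4$). The identity $\Lm(\lm)+1/\Lm(\lm)=t$ then forces $\Lm(\lm)\in\{s,\overline{s}\}\subset S_p$, so $E_{\Lm(\lm)}$ and $E_{{\Lm}'(\lm)}=E_{1/\Lm(\lm)}$ are both supersingular and $\lm\in{\Sigma}_p$ by Proposition~\ref{arigataya}, giving $\Phi([\lm])=[s]$ and establishing surjectivity.
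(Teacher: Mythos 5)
Your proof is correct, and it reaches the same pivotal object as the paper --- the element $\lm$ with $\lm^2=-(\mu-\overline{\mu})^2/(4\mu\overline{\mu})$ --- but it establishes the crucial squareness claim by a genuinely different mechanism. The paper isolates this as Lemma~\ref{munozyoukenn} and proves it by passing to the $2$-isogenous curve $\theta(\widetilde{E_m})$ and reusing the argument of Proposition~\ref{propnotinfp} (since $4\nmid p+1$, the isogenous curve cannot have full rational $2$-torsion, so $\sqrt{\mu\overline{\mu}}\notin\fp$); you instead run a direct $2$-descent on $(0,0)$ in $E(\fp)$ for $E\colon y^2=x(x-\mu)(x-\overline{\mu})$: the subgroup $2E(\fp)$ has odd order $(p+1)/2$, so $(0,0)\notin 2E(\fp)$, while if $N=\mu\overline{\mu}$ were a square the tangent-line relation $2f(x)=xf'(x)$ together with the nonsquareness of $f(k)f(-k)=k^4(T^2-4N)$ would produce an explicit rational halving of $(0,0)$. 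The two arguments are dual faces of the same $2$-isogeny descent, but yours stays on the original curve and avoids introducing $\theta$ at all in this step. Your endgame is also cleaner than the paper's: rather than verifying $-(\lm-\sqrt{\lm^2-1})^2\approx\overline{\mu}/\mu$ by explicit computation, you observe that $\Lm(\lm)+1/\Lm(\lm)=2-4\lm^2=s+1/s$ and $\Lm(\lm)\cdot\Lm'(\lm)=1$, so $\{\Lm(\lm),\Lm'(\lm)\}$ and $\{s,1/s\}$ are the root sets of the same quadratic, which immediately gives $\Phi([\lm])=[s]$ (after ruling out $s=-1$ via $j=1728$, which you do correctly). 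What the paper's route buys is that the isogeny $\theta$ is already set up in Section~\ref{kakkogonituite} and gets reused; what your route buys is self-containment and a conceptually transparent reason ($2E(\fp)$ has odd order) for the obstruction.
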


We use the next lemma to prove Proposition \ref{resp1}.
\begin{lem}
\label{munozyoukenn}
    Assume that $p \equiv 1 \bmod 4$. Then for every $m:=\overline{\mu}/{\mu}$ in $S_p$,
    $-(\mu-\overline{\mu})^2/\mu \overline{\mu}$ is square in ${\fp}^{*}$.
\end{lem}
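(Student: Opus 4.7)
The plan is to recast the statement as a non-degeneracy condition on the $2$-Sylow subgroup of the twist of $E_m$ defined over $\fp$. Consider the elliptic curve
\[
E:\ y^2=x(x-\mu)(x-\overline{\mu}),
\]
which is defined over $\fp$ because $\mu+\overline\mu,\ \mu\overline\mu\in\fp$, and which has the same $j$-invariant as the Legendre curve $E_m$ via the change of variable $x\mapsto\mu x$. In particular $E$ is supersingular over $\overline{\fp}$, and since $p\ge 5$ any supersingular elliptic curve over $\fp$ has $a_p=0$; hence $\#E(\fp)=p+1\equiv 2\pmod 4$ using $p\equiv 1\pmod 4$. The $2$-Sylow subgroup of $E(\fp)$ is therefore cyclic of order $2$, and since $\mu\notin\fp$ the only $\fp$-rational $2$-torsion point of $E$ is $(0,0)$, which must generate this $2$-Sylow. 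In any abelian group whose $2$-Sylow is $\bZ/2\bZ$, multiplication by $2$ annihilates the $2$-Sylow, so $(0,0)\notin [2]E(\fp)$.

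I next translate this into a square-class condition on $\mu\overline\mu$. The standard duplication formula applied to $E$ gives
\[
x([2]P)=\frac{(x(P)^2-\mu\overline\mu)^2}{4\,x(P)(x(P)-\mu)(x(P)-\overline\mu)},
\]
so $[2]P=(0,0)$ forces $x(P)^2=\mu\overline\mu$; in particular $\mu\overline\mu$ must be a square in $\fp$. Writing $\mu\overline\mu=s^2$ with $s\in\fp$, the two candidates $x(P)=\pm s$ give $y(P)^2=s^2(2s-(\mu+\overline\mu))$ and $y(P)^2=-s^2(2s+(\mu+\overline\mu))$, whose product equals $(\mu-\overline\mu)^2$. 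Since $\mu-\overline\mu$ is a nonzero $\fp$-multiple of any fixed $\sqrt{\delta}$ generating $\fpp/\fp$, the element $(\mu-\overline\mu)^2$ is a non-square in $\fp$, so exactly one of the two $y^2$-candidates is a square in $\fp$. Consequently $(0,0)\in [2]E(\fp)$ if and only if $\mu\overline\mu$ is a square in $\fp$.

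Combining the two steps, $\mu\overline\mu$ is a non-square in $\fp$. Together with $(\mu-\overline\mu)^2$ being a non-square and $-1$ being a square in $\fp$ (using $p\equiv 1\pmod 4$), the quantity $-(\mu-\overline\mu)^2/(\mu\overline\mu)$ is the product of a square with the ratio of two non-squares, hence a square in $\fp$, as required. The main calculational step is the duplication-formula analysis and the identity that the product of the two candidate values of $y(P)^2$ equals $(\mu-\overline\mu)^2$; this is routine algebra, but the sign bookkeeping must be done carefully so that the non-squareness of $(\mu-\overline\mu)^2$ is exploited in the correct direction.
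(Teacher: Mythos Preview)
Your proof is correct. Both you and the paper establish the key intermediate fact that $\mu\overline\mu$ is a non-square in $\fp$, and both deduce this from $\#E(\fp)=p+1\equiv 2\pmod 4$ for the supersingular curve $E:y^2=x(x-\mu)(x-\overline\mu)$; the difference lies in how that deduction is made. The paper passes to the $2$-isogenous curve $\theta(E):Y^2=X\bigl(X^2+2(\mu+\overline\mu)X+(\mu-\overline\mu)^2\bigr)$ from \S\ref{kakkogonituite} and observes that its nontrivial $2$-torsion, with $x$-coordinates $-(\mu+\overline\mu)\pm 2\sqrt{\mu\overline\mu}$, cannot be $\fp$-rational since $4\nmid\#\theta(E)(\fp)$ (exactly the argument of Proposition~\ref{propnotinfp}); hence $\sqrt{\mu\overline\mu}\notin\fp$. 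It then writes $\mu=a+b\sqrt{\mu\overline\mu}$ with $a,b\in\fp$ and reads off $-(\mu-\overline\mu)^2/\mu\overline\mu=-4b^2$ directly. You instead stay on $E$ itself and analyse the halves of $(0,0)$ via the duplication formula, which is essentially the dual $2$-descent computation. Your route avoids introducing the isogeny $\theta$ and is entirely self-contained; the paper's route yields the explicit square value $-4b^2$ with no Legendre-symbol bookkeeping. One small computational slip: the product of your two candidate $y(P)^2$-values is $s^4(\mu-\overline\mu)^2$, not $(\mu-\overline\mu)^2$, but since $s^4$ is a square this does not affect the argument.
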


\begin{proof}
    Since $E_m$ is $\super$, the elliptic curve
    $\widetilde{E_m}$ given by the equation $Y^2=X(X-\mu)(X-\overline{\mu})$ is also $\super$. We now consider the supersingular elliptic curve $\theta(\widetilde{E_m})$, which we defined in \S \ref{kakkogonituite}. The defining equation of $\theta(\widetilde{E_m})$ is
    \begin{eqnarray*}
Y^2&=&X\left(X^2+2(\mu+\overline{\mu}\right)X+\left(\mu-\overline{\mu})^2 \right)\\
&=&X \left(X-\left(-(\mu+\overline{\mu})+2\sqrt{\mu \overline{\mu}}\right) \right) \left(X- \left(-(\mu+\overline{\mu})-2\sqrt{\mu \overline{\mu}} \right) \right).
    \end{eqnarray*}
    Because of the same discussion with Proposition \ref{propnotinfp}, we find that $-(\mu+\overline{\mu}) \pm 2\sqrt{\mu \overline{\mu}}$ are not in $\fp$ and in particular, $\pm \sqrt{\mu \overline{\mu}}$ is not in $\fp$. Set $\varepsilon:=\sqrt{\mu \overline{\mu}}$. Then $\varepsilon \in \fpp \smallsetminus \fp$. Since $\fpp \supset \fp$ is an extension of degree 2, we can write $\mu=a+b\varepsilon$ with $a,b \in \fp$. Then we have
    \[
    \frac{-(\mu-\overline{\mu})^2}{\mu \overline{\mu}}=\frac{-4b^2 {\varepsilon}^2}{{\varepsilon}^2}=-4b^2.
    \]
    Since $-1$ is in ${{\fp}^{*}}^2$, we find that $-(\mu-\overline{\mu})^2/\mu \overline{\mu}$ is square in ${\fp}^{*}$.
\end{proof}

Now we are ready to prove Proposition \ref{resp1}.

\begin{proof}[Proof of Proposition \ref{resp1}]
Let $m:= \overline{\mu}/\mu \in S_p/\approx$. 
Let $\lm$ be a square root of $-(\mu-\overline{\mu})^2/4\mu \overline{\mu}$, which belongs to $\fp$. Then we have
\[
{\lm}^2-1=\frac{-(\mu+\overline{\mu})^2}{4\mu \overline{\mu}},
\]
and we get 
\[
-\left(\lm-\sqrt{{\lm}^2-1} \right)^2 \approx - \left( \frac{\overline{\mu}\sqrt{-1}}{\sqrt{\mu \overline{\mu}}} \right)^2 \approx \overline{\mu}/\mu,
\]
which is independent of the choice of $\lm$ and $\sqrt{{\lm}^2-1}$.
From Proposition \ref{arigataya}, a curve $C_{\lm}$ is superspecial for this $\lm$. Thus we have ${\Phi}(\lm) \approx m$.
\end{proof}

\begin{cor}
\label{number1}
Assume that $p \equiv 1 \bmod4$. Then, the cardinality of ${\Sigma}_{p}/\sim$ is equal to the cardinality of $S_{p}/\sim$. In addition, $\#{\Sigma}_{p}=\# S_{p}=h(-4p)$.
\end{cor}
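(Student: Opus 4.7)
The plan is to combine the previous propositions (Proposition \ref{injective}, Proposition \ref{resp1}, and Proposition \ref{numsp}) and then unwind the two equivalence relations to pass from representatives back to actual cardinalities.

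First, I would invoke Proposition \ref{injective} to know that
\[
\Phi: \Sigma_p/\sim \ \longrightarrow \ \{\, s \in \fpp \smallsetminus \{0,1\} \mid E_s \text{ is supersingular} \,\}/\approx
\]
is injective. Next, by Proposition \ref{resp1}, under the assumption $p \equiv 1 \bmod 4$ the image of $\Phi$ is exactly $S_p/\approx$. Restricting the codomain, $\Phi$ becomes a bijection
\[
\Phi: \Sigma_p/\sim \ \xrightarrow{\ \sim\ } \ S_p/\approx ,
\]
which immediately gives the first claim: $\#(\Sigma_p/\sim) = \#(S_p/\approx)$.

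To upgrade this to $\#\Sigma_p = \#S_p$, I would then observe that each equivalence class has size exactly two. For $\sim$ on $\Sigma_p$, this is because $\lm$ and $-\lm$ are distinct in $\fp \smallsetminus \{0,\pm 1\}$ whenever $p > 2$. For $\approx$ on $S_p$, the remark just after Definition \ref{eqrelation} records that under $p \equiv 1 \bmod 4$ the set $S_p$ is doubled by this relation (equivalently, $\mu^{p-1} \ne \mu^{-(p-1)}$ for the $\mu \in \fpp \smallsetminus \fp$ under consideration, since the opposite would force $\mu^{p-1}$ to be a square root of $1$ and thus $\pm 1$, contradicting the supersingularity data already exploited in Proposition \ref{propnotinfp}). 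Doubling both sides of the equality of quotient cardinalities therefore yields $\#\Sigma_p = \#S_p$. Finally, Proposition \ref{numsp} gives $\#S_p = h(-4p)$, completing the chain ${\psi}_p = \#\Sigma_p = h(-4p)$.

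The proof is essentially an assembly of preceding results, so there is no serious obstacle; the only point that requires a moment of care is the justification that each $\approx$-class in $S_p$ has exactly two elements (i.e.\ that $\overline{\mu}/\mu \ne \mu/\overline{\mu}$), which is where I would point back to Proposition \ref{propnotinfp} or directly to the remark following Definition \ref{eqrelation} to avoid re-deriving it.
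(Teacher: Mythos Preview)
Your proof is correct and follows essentially the same route as the paper: combine Proposition \ref{injective} and Proposition \ref{resp1} to get the bijection $\Sigma_p/\sim \to S_p/\approx$, then invoke Proposition \ref{numsp} for $\#S_p = h(-4p)$. The only extra detail you supply is the explicit justification that all $\sim$- and $\approx$-classes have size two, which the paper records separately right after Definitions \ref{eqrelation} and \ref{eqrelationc}; your pointer to that remark is the appropriate reference (the appeal to Proposition \ref{propnotinfp} is not quite the right hook, but the remark already covers it).
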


\begin{proof}
    The first part follows from Proposition \ref{injective} and Proposition \ref{resp1}. The second part follows from Proposition \ref{classnumber} (a).
    \end{proof}

\subsection{The case of {$p \equiv 3  \bmod 8$}}
We determine the image of the map $\Phi$ in the case of $p \equiv 3 \bmod 8$. 
\begin{prop}
\label{sur3}
Assume that $p \equiv 3 \bmod 8$. Then the image of $\Phi$ is ${\Theta}_{p}/\approx \ \sqcup \  T'_p/\approx$.

\end{prop}

\begin{proof}
First, for all $\lm \in {\Sigma}_{p}$, we have $\Lm(\lm) \neq -1$ since $\lm \neq 0,1,-1$. Next, take $\Lm(\lm) \in {\Theta}_p/\approx$. Since two elliptic curves $E_{\Lm(\lm)}$ and $E_{{\Lm}'(\lm)}$ are supersingular, by Proposition \ref{arigataya}, we see that a curve
        \[
        C_{\lm}:\ y^2=x(x-1)(x+1)(x-\lm)\left(x-\frac{1}{\lm} \right)
        \]
        is superspecial. Thus for this $\lm \in {\Sigma}_p /\sim$, we have ${\Phi}(\lm) \approx \Lm(\lm)$.
        Next, take $s\in T'_p / \approx$. 
        From Lemma \ref{tpup}, there is a $t\in \fp$ such that $s=-t^2$. Set $\lm:=(t^2+1)/2t$. Then we have 
        \[
        {\lm}^2-1=\frac{(t^2-1)^2}{4t^2},
        \]
        and
        \[
        -(\lm-\sqrt{{\lm}^2-1})^2\approx -t^2 \approx s,
        \]
        which is independent of the choice of $\sqrt{{\lm}^2-1}$. By Proposition \ref{arigataya}, $C_{\lm}$ is superspecial for this $\lm$. Thus we obtain ${\Phi}(\lm) \approx s$.
\end{proof}

\begin{cor}
\label{number3}
    Assume that $p \equiv 3 \bmod 8$. Then the cardinality of ${\Sigma}_p /\sim$ is equal to the cardinality of ${\Theta}_{p} /\approx \ \sqcup \ T'_p / \approx$. In addition, 
    $\# {\Sigma}_p=6h(-p)-2$.
\end{cor}
\begin{proof}
    The first part follows by Proposition \ref{injective} and Proposition \ref{sur3}. The second part follows by Proposition \ref{numtp} and Corollary \ref{corsp837}.
\end{proof}

\subsection{The case of $p \equiv 7 \bmod 8$}
We determine the image of the map $\Phi$ in the case of $p \equiv 7 \bmod 8$. By the same discussion with the case $p \equiv 3 \bmod 8$, we obtain the following result.
\begin{prop}
\label{sur7}
Assume that $p \equiv 7 \bmod 8 $. Then the image of $\Phi$ is ${\Theta}_{p}/\approx \ \sqcup \ U'_p/\approx$. 
\end{prop}

\begin{cor}
\label{number7}
    Assume that $p \equiv 7 \bmod 8$. Then the cardinality of 
    ${\Sigma}_p /\sim$ is equal to the cardinality of ${\Theta}_{p} /\approx \ \sqcup \ U'_p / \approx$. In addition, $\# {\Sigma}_p=2h(-p)-2$.
\end{cor}
\begin{proof}
    The first part follows by Proposition \ref{injective} and Proposition \ref{sur7}. The second part follows by Proposition \ref{classnumber} (c) and Corollary \ref{corsp837}.
\end{proof}

By summarizing the results for all the above cases, we obtain Theorem A.

\section{Proof of Theorem B}
\label{theoremb}
    

We prove Theorem B in \textcolor{black}{the introduction}. We use the idea by Fouvry and Murty \cite[Theorem 4]{FM01}. 

Let $X$, $\epsilon$ and $N$ be as in Theorem B. By changing the order of summations in $\sum_{|\lm| \le N} {\phi}_{\lm}(X)$, we can express it with ${\psi}_{p}$ as follows. 
\begin{eqnarray*}
    \sum_{|\lm| \le N} {\phi}_{\lm}(X) &=&  \sum_{|\lm| \le N} \sum_{\substack{p < X \\  C_{\lm} \text{ is superspecial} }} 1 
= \sum_{p < X} \sum_{\substack{|\lm| \le N \\ C_{\lm} \text{ is superspecial} }} 1 \\
&=& \sum_{p < X}   \left( \frac{2N}{p} +O(1)  \right) {\psi}_{p} . 
\end{eqnarray*}
Since Theorem A enables us to express ${\psi}_{p}$ with the class numbers, the sum is 
\begin{align*}
\label{anaeq1}
   && \sum_{\substack{p<X \\ p \equiv 1 \bmod 4}} \frac{2N}{p} h(-4p) + 6 \sum_{\substack{p<X \\ p  \equiv 3 \bmod 8}} \frac{2N}{p} h(-p) + 2\sum_{\substack{p<X \\ p \equiv 7 \bmod 8}} \frac{2N}{p} \ h(-p) \\
   && +\  O \left( N \log \log X + X^{\frac{3}{2}} \right) .
    \stepcounter{equation}\tag{\theequation}
\end{align*}
In addition, we note the fact that 
\[
 \sum_{\substack{p<X \\ p \equiv3  \bmod 4}}h(-p)=O(X^{3/2}),
\] and
\[
 \sum_{\substack{p<X \\ p \equiv 1 \bmod 4}}h(-4p)=O(X^{3/2}).
 \]
(cf. Mertens \cite{FM} and  Siegel \cite{SI}). \textcolor{black}{By $\log (X) = o(X^\epsilon)$ and the assumption that $N$ is greater than $X^{1+\epsilon}$, the error terms of (4)} are ultimately negligible compared to $N \sqrt{X}/ \log X$. 
Recall the Dirichlet class number formula
\[
h(d)=\frac{\sqrt{|d|}}{ \pi}L(1,{\chi}_{d}),
\]
where $\chi_d$ is the character defined by the Kronecker symbol $(-d\ |\ \cdot)$.
The sum \eqref{anaeq1} is written as 
\begin{align*}
\label{anaeq2}
  && \frac{2N}{\pi} \left(  \sum_{\substack{p<X \\ p \equiv 1 \bmod 4}}\!\! \!\! \frac{2}{\sqrt{p}}L(1,{\chi}_{-4p})+ \!\! \!\! \!\! \sum_{\substack{p<X \\ p \equiv 3\bmod 8 }} \!\! \!\!  \frac{6}{\sqrt{p}}L(1,{\chi}_{-p}) + \!\! \!\! \!\!  \sum_{\substack{p<X \\ p \equiv 7 \bmod 8}} \!\! \!\!  \frac{2}{\sqrt{p}}L(1,{\chi}_{-p}) \right)\\
  &&+ o \left( \frac{N\sqrt{X}}{\log X} \right).
  \stepcounter{equation}\tag{\theequation} 
\end{align*}
In addition, from Polya's inequality (cf. Apostol \cite[Theorem 8.21]{Apo}) and Abel's identity (cf. Apostol \cite[Theorem 4.2]{Apo}), for any $1 \le m <M$,
if $p \equiv 1 \bmod 4$ we have
    \[
    \left| \sum_{m< n \le M} \frac{{\chi}_{-4p} (n)}{n} \right| < \frac{3\sqrt{4p}\log 4p}{m},
    \]
  and if $p \equiv 3 \bmod 4$, we have 
     \[
    \left| \sum_{m< n \le M} \frac{{\chi}_{-p} (n)}{n} \right| < \frac{3\sqrt{p}\log p}{m}.
    \]
Hence for any parameter $U>1$, we have
\begin{eqnarray}
\label{polyabel}
     \left| r_2(U,-p) \right| < \frac{3\sqrt{p}\log p}{U}, \quad \left| r_2(U,-4p)\right| < \frac{3\sqrt{4p}\log 4p}{U},
\end{eqnarray}
where 
\begin{eqnarray*}
    r_2(U,-p)=L(1,{\chi}_{-p})- \sum_{n \le U}\frac{{\chi}_{-p}(n)}{n},\\
    r_2(U,-4p)=L(1,{\chi}_{-4p})- \sum_{n \le U}\frac{{\chi}_{-4p}(n)}{n}.
\end{eqnarray*}
We choose $U=X^{3/4}$. Then \eqref{anaeq2} is as follow\textcolor{black}{s}.
\begin{align*}
\label{anaeq3}
   && \frac{2N}{\pi} \cdot 2 \sum_{\substack{p<X \\ p \equiv 1 \bmod 4}}\frac{1}{\sqrt{p}} \sum_{n \le U}\frac{{\chi}_{-4p}(n)}{n}+  \frac{2N}{\pi} \cdot 6 \sum_{\substack{p<X \\ p \equiv 3 \bmod 8 }} \frac{1}{\sqrt{p}}\sum_{n \le U} \frac{{\chi}_{-p}(n)}{n} \\ 
   && + \frac{2N}{\pi} \cdot 2\sum_{\substack{p<X \\ p \equiv 7 \bmod 8 }}\frac{1}{\sqrt{p}}\sum_{n \le U}\frac{{\chi}_{-p}(n)}{n} 
    + o \left( \frac{N\sqrt{X}}{\log X} \right)
     \stepcounter{equation}\tag{\theequation} .
\end{align*}

\begin{rem}
Using the inequalities \eqref{polyabel}, 
we see that the error terms from $r_2(U,-p)$ and $r_2(U,-4p)$ are smaller than 
\[
C \cdot N \sum_{p < X} \frac{\log 4p}{U}
\]
for a certain constant $C$. Since $U=X^{3/4}$, we see that the error terms are ultimately negligible compared to $N\sqrt{X}/ \log X$.
    
\end{rem}
We recall that ${\chi}_{-4p}$ and ${\chi}_{-p}$ are expressed by the Legendre symbols: 
 if $p \equiv 1\bmod 4$\textcolor{black}{, then}
    \[
    {\chi}_{-4p}= \left( \frac{2}{n} \right)^2(-1)^{\frac{p-1}{2} \cdot \frac{n-1}{2}} \left( \frac{n}{p} \right),
    \]
if $p \equiv 3 \bmod 4$\textcolor{black}{, then}
     \[
    {\chi}_{-p}= \left( \frac{n}{p} \right).
    \]
Substituting these \textcolor{black}{in} \eqref{anaeq3} yields
\begin{align*}
\label{anaeq4}
   && \frac{4N}{\pi}S_1(X,U) +  \frac{12N}{\pi} S_3(X,U)
   + \frac{4N}{\pi}  S_7(X,U)
    + o \left( \frac{N\sqrt{X}}{\log X} \right),
     \stepcounter{equation}\tag{\theequation} 
\end{align*}
where
\begin{eqnarray*}
    S_1(X,U)&:=& \sum_{n \le U} \frac{1}{n} \sum_{\substack{p<X \\ p \equiv1 \bmod 4}}\frac{\left( \frac{2}{n} \right)^2(-1)^{\frac{p-1}{2} \cdot \frac{n-1}{2}} \left( \frac{n}{p} \right)}{\sqrt{p}},\\
    S_3(X,U)&:=&\sum_{n \le U} \frac{1}{n} \sum_{\substack{p<X \\ p \equiv 3\bmod 8}}\frac{\left( \frac{n}{p} \right)}{\sqrt{p}},\\
    S_7(X,U)&:=&\sum_{n \le U} \frac{1}{n} \sum_{\substack{p<X \\ p \equiv7 \bmod 8}}\frac{\left( \frac{n}{p} \right)}{\sqrt{p}}.\\
\end{eqnarray*}
We calculate these sums using the idea of Fouvry and Murty \cite[Theorem 4]{FM01}.

We first calculate $S_3(X,U)$ (resp.\ $S_7(X,U)$). If we sum over only $n \le U$ such that $n$ is a perfect square or $n/2$ is a perfect square, then we have 
    \begin{eqnarray*}
    \sum_{\substack{n \le U \\ n=d^2 \text{ for some } d \in \bN }} \frac{1}{d^2} \sum_{\substack{p<X \\ p \equiv 3 \bmod 8 }} \frac{1}{\sqrt{p}}-\sum_{\substack{n \le U \\ n=2l^2 \text{ for some } l \in \bN }} \frac{1}{2l^2} \sum_{\substack{p<X \\ p \equiv 3 \bmod 8}} \frac{1}{\sqrt{p}}.
    \end{eqnarray*}
we note that $2 \notin {{\fp}^{*}}^2$ if $p \equiv 3 \bmod 8$. 
We know that the inner sum of each term is equal to
\[
\frac{2}{\varphi(8)} \frac{\sqrt{X}}{\log X} +o \left( \frac{\sqrt{X}}{\log X} \right),
\]
where $\varphi$ is Euler's totient function. Next, we can calculate 
\begin{eqnarray*}
    \sum_{\substack{n \le U \\ n=d^2 \text{ for some } d \in \bN }} \frac{1}{d^2}=\frac{{\pi}^2}{6}+O\left( \frac{1}{X^{3/8}} \right).
\end{eqnarray*}
Thus, we get
\[
 \sum_{\substack{n \le U \\ n=d^2 \text{ for some } d \in \bN }} \frac{1}{d^2} \sum_{\substack{p<X \\ p \equiv 3 \bmod 8 }} \frac{1}{\sqrt{p}}=\frac{{\pi}^2}{12} \frac{\sqrt{X}}{\log X} +o \left( \frac{\sqrt{X}}{\log X} \right).
\]
Similarly, we get 
\[
\sum_{\substack{n \le U \\ n=2l^2 \text{ for some } l \in \bN }} \frac{1}{2l^2} \sum_{\substack{p<X \\ p \equiv 3 \bmod 8 }} \frac{1}{\sqrt{p}}=\frac{{\pi}^2}{24} \frac{\sqrt{X}}{\log X} +o \left( \frac{\sqrt{X}}{\log X} \right).
\]
\textcolor{black}{Following a similar approach to} Fouvry and Murty \cite[p.\ 92]{FM01} and using Lemma \ref{jutila}, the sum of the remaining terms is 
\[
\sum_{\substack{n \le U \\ n \text{ and } n/2 \text{ is not square}}} \frac{1}{n} \sum_{\substack{p<X \\ p \equiv 3 \bmod 8}}\frac{\left( \frac{n}{p} \right)}{\sqrt{p}} =o \left( \frac{\sqrt{X}}{\log X} \right).
\]
Hence we obtain
\begin{eqnarray}
\label{3mod8}
    S_3(X,U)=\frac{{\pi}^2}{24} \frac{\sqrt{X}}{\log X} +o \left( \frac{\sqrt{X}}{\log X} \right).
\end{eqnarray}

In the case of $S_7(X,U)$, since $2 \in {{\fp}^{*}}^2$ if $p \equiv 7 \bmod 8$, the same discussion yields
\begin{eqnarray}
\label{7mod8}
    S_7(X,U)=\frac{{\pi}^2}{8} \frac{\sqrt{X}}{\log X} +o \left( \frac{\sqrt{X}}{\log X} \right).
\end{eqnarray}

Second, we calculate $S_1(X,U)$. If $n$ is even, then ${\chi}_{-4p}(n)$ is $0$ so we only treat odd $n$. If we sum over only odd $n \le U$ such that $n$ is perfect square, then we have
  \begin{eqnarray*}
    \sum_{\substack{n \le U \\ n=(2d-1)^2 \text{ for some } d \in \bN }} \frac{1}{(2d-1)^2} \sum_{\substack{p<X \\ p \equiv 1  \bmod 4}} \frac{1}{\sqrt{p}}.
    \end{eqnarray*}
    We know that 
    \begin{eqnarray*}
     && \sum_{\substack{p<X \\ p\equiv 1 \bmod 4}} \frac{1}{\sqrt{p}}=  \frac{2}{\varphi(4)} \frac{\sqrt{X}}{\log X} +o \left( \frac{\sqrt{X}}{\log X} \right),\\
     && \sum_{\substack{n \le U \\ n=(2d-1)^2 \text{ for some } d \in \bN }} \frac{1}{(2d-1)^2}=\frac{{\pi}^2}{8}+O\left( \frac{1}{\sqrt{U}} \right).
    \end{eqnarray*}
    Thus we obtain 
    \begin{eqnarray*}
         \sum_{\substack{n \le U \\ n=(2d-1)^2,\ \exists d \in \bN }} \frac{1}{(2d-1)^2} \sum_{\substack{p<X \\ p \equiv 1 \bmod 4 }} \frac{1}{\sqrt{p}}=\frac{{\pi}^2}{8}\frac{\sqrt{X}}{\log X} +o \left( \frac{\sqrt{X}}{\log X} \right).
    \end{eqnarray*}
\textcolor{black}{Following a similar approach to} Fouvry and Murty \cite[p.92]{FM01}, the sum of the remaining terms is 
\[
\sum_{\substack{n \le U \\ \text{odd }n\text{ is not square}}} \frac{1}{n} \sum_{\substack{p<X \\ p \equiv 1 \bmod 4}}\frac{\left( \frac{2}{n} \right)^2 (-1)^{\frac{p-1}{2} \cdot \frac{n-1}{2}} \left( \frac{n}{p} \right)}{\sqrt{p}} =o \left( \frac{\sqrt{X}}{\log X} \right).
\]
Thus we find that
\begin{eqnarray}
\label{1mod4}
    S_1(X,U)=\frac{{\pi}^2}{8}\frac{\sqrt{X}}{\log X} +o \left( \frac{\sqrt{X}}{\log X} \right).
\end{eqnarray}
Substituting the equations \eqref{3mod8}, \eqref{7mod8}, \eqref{1mod4} for \eqref{anaeq4} yields 
\[
 \sum_{|\lm| \le N} {\phi}_{\lm}(X) =\frac{3\pi N}{2}\frac{\sqrt{X}}{\log X} +o \left( \frac{N \sqrt{X}}{\log X} \right).
\]
Hence we obtain
\[
 \frac{1}{N}\sum_{|\lm| \le N} {\phi}_{\lm}(X) =\frac{3\pi }{2}\frac{\sqrt{X}}{\log X} +o \left( \frac{ \sqrt{X}}{\log X} \right).
\]

\section{Proof of Theorem C}
\label{theoremc}

We prove Theorem C in \textcolor{black}{the introduction}. To show Theorem C, for each $t=0,1, \ldots,p-1$ we calculate the number of $\lm \in \bQ $ \textcolor{black}{with} height less than $N$ and \textcolor{black}{$\lm  \equiv t \bmod p$}. 

We take a positive $\lm=a/b \in \bQ$ with positive coprime integers $a,b$. Assume that $\height (\lm) \le N$. Then we have $\textcolor{black}{1} \le a\le N$ and $1 \le b \le N$. It is easy to see that \textcolor{black}{$a/b  \equiv t \bmod p$} if and only if $b$ is not divisible by $p$ and $a-bt =kp$ for some $k \in \bZ$. We fix $b$ that is not divisible by $p$ and we calculate the number of positive irreducible fractions $\lm=a/b>0$ satisfying \textcolor{black}{$\lm \equiv t \bmod p$}.

If $b=1$, then it is equal to $N/p + O(1)$. If $b \ge 2$, then we have $k \neq 0$ since $a$ and $b$ are relatively prime. Since $a$ satisfies $\textcolor{black}{1} \le a \le N$, we have $\textcolor{black}{(1-bt)/p} \le k \le (N-bt)/p$. It is easy to see that $a$ and $b$ are relatively prime if and only if $k$ and $b$ are relatively prime. Then we count the number of such $k$, which is
\[
\sum_{d \mid b}\mu(d) \frac{N}{pd} +\textcolor{black}{O(d(b))}.
\]
Here, $\mu$ is the M\"{o}bius function \textcolor{black}{and $d(b)$ is the number of divisors of $b$.} We see that $\sum_{b\le N}d(b) \sim N\log N$.

\noindent
Then we find that $\# \{ \lm \in \bQ \mid \lm >0,\ \height(\lm) \le N,\ \lm \equiv t \bmod p\}$ is 
\[
 \frac{N}{p} + \frac{N}{p} \sum_{\substack{2 \le b \le N \\ p \nmid b}} \frac{\varphi(b)}{b} + O(N \log N).
\]
We calculate the sum of $\varphi(b)/b$. We see that 
\[
\sum_{\substack{2 \le b \le N \\ p \nmid b}} \frac{\varphi(b)}{b} =\sum_{2 \le b \le N} \frac{\varphi(b)}{b}- \sum_{\substack{2 \le b \le N \\ p \mid b}} \frac{\varphi(b)}{b}.
\]
\textcolor{black}{From Apostol \cite[\S 3 Exercise 5]{Apo}, we have} 
\begin{eqnarray*}
    \sum_{2 \le b \le N} \frac{\varphi(b)}{b} &=&  \frac{6}{{\pi}^2}N +\textcolor{black}{ O(\log N)},
\end{eqnarray*}
and therefore
\begin{eqnarray*}
    \sum_{\substack{2 \le b \le N \\ p \mid b}} \frac{\varphi(b)}{b}
    = \sum_{1 \le l \le \frac{N}{p}} \frac{\varphi(lp)}{lp}
    \le \sum_{1 \le l \le \frac{N}{p}} \frac{\varphi(l)}{l}= \frac{6}{{\pi}^2}\frac{N}{p} + \textcolor{black}{O(\log N)}.
\end{eqnarray*}
Then we find that \textcolor{black}{
\[
\# \{ \lm \in \bQ \mid \lm >0,\ \height (\lm) \le N,\ \lm \equiv t \bmod p\}= \frac{6}{{\pi}^2} \frac{N^2}{p} + O(N\log N) +O\left( \frac{N^2}{p^2} \right).
\]}
\textcolor{black}{This asymptotic approximation does not depend on $t$. Hence, including the contributions from negative $\lm$, we obtain}
\[
\# \{ \lm \in \bQ \mid  \height (\lm) \le N,\ \lm \equiv t \bmod p\}= \frac{12}{{\pi}^2} \frac{N^2}{p} + O(N\log N) +O\left( \frac{N^2}{p^2} \right).
\]
The calculation above gives
\begin{eqnarray*}
    \sum_{\text{ht}(\lm) \le N} {\phi}_{\lm}(X) &=& 
\sum_{p < X} \sum_{\substack{\text{ht}(\lm) \le N \\ C_{\lm} \text{ is superspecial} }} 1\\
&=& \sum_{p < X} \left(  \frac{12}{{\pi}^2} \frac{N^2}{p} + O(N\log N) +O\left( \frac{N^2}{p^2} \right) \right) {\psi}_p.
\end{eqnarray*}
A calculation shows that the error terms are ultimately negligible compared to $N^2 \sqrt{X} /\log X$. Thus, from the result of Theorem B, we find that
\[
\sum_{\lm \in \bQ,\ \height(\lm) \le N } {\phi}_{\lm}(X) = \frac{9}{\pi} \frac{N^2\sqrt{X}}{\log X} +o \left( \frac{N^2\sqrt{X}}{\log X}\right).
\]
Hence we obtain
 \[
    \frac{1}{N^2}\sum_{\lm \in \bQ,\ \height(\lm) \le N } {\phi}_{\lm}(X) = \frac{9}{\pi} \frac{\sqrt{X}}{\log X} +o \left( \frac{\sqrt{X}}{\log X}\right).
 \]

\subsection*{Acknowledgements}
This paper was written under the supervision of Professor Shushi Harashita.
The author thanks him for his consistent support. 
The author also thank Toshiyuki Katsura for his helpful comments.
The author is grateful to the anonymous referees for their careful reading of this article and their comments. 
This research was supported by JSPS Grant-in-Aid for Scientific Research (C) 21K03159.
 
\subsection*{Data availability statement} 
Data sharing is not applicable to this article, as no datasets were generated or analyzed during the present work.

\subsection*{Conflict of interest}
The authors declare no conflicts of interest associated with this manuscript.

\end{document}